\documentclass[11pt]{article}
\usepackage{amsmath,amsthm,amsfonts,amssymb,amscd, amsxtra, mathrsfs,amssymb,amstext,amscd,amsxtra}
\usepackage{url}
\usepackage{pgfplots}
\usepackage{tikz}
\usepackage{mathtools}

\usepackage{booktabs}

\pgfplotsset{width=18em,every axis/.append style={axis lines=center,ylabel style={at={(ticklabel cs:0.5)},rotate=90,anchor=south},,xlabel style={at={(ticklabel cs:0.5)},anchor=north}}}

\usepackage[margin=2.8 cm,nohead]{geometry}
\usepackage[active]{srcltx}
\usepackage{verbatim}
\usepackage{url}
\usepackage{cite}
\usepackage{enumerate}
\usepackage[ruled]{algorithm2e} 
\usepackage{graphicx,float}
\usepackage{mathrsfs,threeparttable,color,soul,colortbl,hhline,rotating,array,multirow,makecell}

\newcommand{\R}{\mathbb{R}}
\newcommand{\N}{\mathbb{N}}
\newcommand{\PJ}{\cal{P}}
\newcommand{\RF}{\cal{R}}
\newcommand{\DR}{\cal{T}}
\newcommand{\id}{\mbox{Id}}
\newcommand{\Fix}{\mbox{Fix~}}
\newtheorem{theorem}{Theorem}
\newtheorem{lemma}[theorem]{Lemma}
\newtheorem{definition}{Definition}
\newtheorem{corollary}[theorem]{Corollary}
\newtheorem{proposition}[theorem]{Proposition}

\newtheorem{remark}{Remark}

\usepackage{epstopdf}
\usepackage{algorithmic}
\usepackage{todonotes}

\usepackage{graphicx,float}
\usepackage{mathrsfs,threeparttable,color,soul,colortbl,hhline,rotating,array,multirow,makecell}

\begin{document}
\title{Approximate Douglas-Rachford algorithm  for  two-sets \\convex feasibility problems}
\author{
R. D\'iaz Mill\'an \thanks{School of Information Technology, Deakin University, Melbourne,   Australia, e-mail :{\tt  r.diazmillan@deakin.edu.au}.}
\and
O.  P. Ferreira  \thanks{IME, Universidade Federal de Goi\'as,  Goi\^ania, Brazil, e-mail:{\tt  orizon@ufg.br}. This authors was supported in part by  CNPq grants 305158/2014-7 and 302473/2017-3,  FAPEG/PRONEM- 201710267000532.}
\and
 J. Ugon  \thanks{School of Information Technology, Deakin University, Melbourne,   Australia,  e-mail:{\tt  j.ugon@deakin.edu.au}.}
 }
\maketitle
\begin{abstract}
In this paper, we propose a new algorithm combining the Douglas-Rachford (DR) algorithm and the Frank-Wolfe algorithm, also known as the conditional gradient (CondG) method, for solving the classic convex feasibility problem. Within the algorithm,  which will be named {\it Approximate Douglas-Rachford (ApDR) algorithm}, the CondG method is used as a subroutine to compute feasible inexact projections on the sets under consideration, and the ApDR iteration is defined based on the DR iteration. The ApDR algorithm generates two sequences,  the main sequence, based on the  DR  iteration, and its corresponding shadow sequence. When the intersection of the feasible sets is nonempty, the main sequence converges to a fixed point of the usual DR operator, and the shadow sequence converges to the solution set. We provide some numerical experiments to illustrate the behaviour of the sequences produced by the proposed algorithm.

\end{abstract}
{\bf Keywords:}  Convex feasibility  problem, Douglas-Rachford algorithm, Frank-Wolfe algorithm,  conditional gradient method, inexact projections. \\
{\bf AMS:} 65K05,   	90C30,  	90C25.
\section{Introduction}
This paper  addresses the classic two-sets convex feasibility problem in finite-dimensional Euclidean space. This problem is formally stated as follows:
\begin{equation}\label{def:InexactProjProb}
\mbox{find } x^*\in A\cap B,
\end{equation}
where the feasible sets   $A, B \subset \mathbb{R}^n$ are convex, closed,  and nonempty sets. A huge variation of practical applications in different areas of mathematics and physical sciences can be modelled in this format, and so Problem (\ref{def:InexactProjProb}) has attracted the attention of many researchers, for example, see \cite{AragonBorweinTam2014, Combettes1996, Combettes1999, HesseLukeNeumann2014} and references therein. Among the various algorithms to solve Problem~\eqref{def:InexactProjProb}, the  {\it Douglas-Rachford algorithm} is one of the most interesting with a long history dating back to the 1950s, see \cite{DouglasRachford1956}.  When applied to the Problem \eqref{def:InexactProjProb}, we can say that this method belongs to the class of the projection algorithm since, at each iteration, a projection on each of the feasible sets is computed, for example, see \cite{artacho2020douglas2019} and the references therein. 


This paper aims to present a new algorithm to solve  Problem~\eqref{def:InexactProjProb}. The proposed algorithm, called {\it Approximate Douglas-Rachford algorithm (ApDR algorithm)},  combines the {\it Douglas-Rachford (DR) algorithm} with the {\it conditional gradient method  (CondG method)} also known as {\it  Frank-Wolfe algorithm}, see  \cite{FrankWolfe1956, LevitinPoljak1966}.  The CondG method is used as the subroutine of the ApDR to compute feasible inexact projections onto the sets under consideration, defining the iteration based on the DR iteration. Specifically, we use the CondG method to minimize the distance between the point to project and the convex set under consideration. The CondG method is a feasible directions method that minimizes the objective function in each iteration, and returns a feasible approximate solution, that is, an inexact projection onto the set in consideration, see \cite{BeckTeboulle2004, Jaggi2013}.  We introduce a relative error criterion to lower the cost of each projection while preserving the convergence of the ApDR algorithm. The ApDR algorithm generates two sequences, one  $(x^k)_{k\in\mathbb{N}}$ based on the DR iteration and the other $(y^k_A)_{k\in\mathbb{N}} \subset A$ corresponding to the shadow sequence of approximate projections. The main results of this paper are as follows. If  $A\cap B\neq \varnothing$, then  $(x^k)_{k\in\mathbb{N}}$  converge to a point  $x^*$, which is a fixed point of the usual  Douglas-Rachford operator,  $(y^k_A)_{k\in\mathbb{N}}$ converges  to ${\PJ}_A(x^*)$  the projection of  $x^*$ onto $A$ and ${\PJ}_A(x^*)\in A\cap B$.

The idea of using the CondG method to compute inexact projections onto the sets $A$ and $B$, instead of the exact ones, is particularly interesting from a computational point of view. It may not be necessary to compute the exact projections when the iterates are far from $A\cap B$, since it adds to the computational cost that the DR algorithm spends computing the projections.  It is noteworthy that the CondG method is easy to implement, has a low computational cost per iteration, and readily exploits separability and sparsity, resulting in high computational performance in different classes of compact sets, see \cite{Dunn1980, FreundGrigas2016, GarberHazan2015, Jaggi2013,  LevitinPoljak1966}.  All these features added to the ApDR algorithm make it very attractive from a computational point of view. In addition, for feasible sets that can only be accessed efficiently through a linear programming oracle, the ApDR algorithm is of considerable interest.  It is worth mentioning that the idea of using the CondG method to compute approximate projections has already been used in several papers. Indeed,  the most direct precursor is \cite{LanZhou2016} which uses the CondG method to compute inexact projections in an accelerated first-order method for solving the constrained optimisation problem. For more applications of the CondG method as a subroutine to compute inexact projections,  we refer the reader to \cite{CardereraPokutta2020, deOliveiraFerreiraSilva2019, ReinierOrizonLeandro2019, MaxJefferson2017, MaxJeffrenato2020}. Another method for solving Problem \eqref{def:InexactProjProb}, which don't use exact projection onto the feasible sets $A$ and $B$, can be found in \cite{DiazMillan2019}. To the best of our knowledge, the combination of the CondG method with the DR algorithm for solving Problem~\eqref{def:InexactProjProb}  has not yet been considered.

The organization of the paper is as follows. In section \ref{sec:Preliminares}, we present some notation and basic results used throughout the paper. In section \ref{Sec:CondG} we describe the   CondG  method  and present some results related to inexact version of  projection, reflection and Douglas-Rachford operator.   In  Sections~\ref{se:DR-Agorithm}  we present ApDR algorithm and its convergence analysis.   Some numerical experiments are provided in Section~\ref{Sec:NumExp}.   We conclude the paper with some remarks in Section~\ref{Sec:Conclusions}.

\section{{Preliminaries}} \label{sec:Preliminares}
In this section, we present  some preliminary  results used throughout the paper.  We denote: ${\mathbb{N}}=\{0,1,2,3, \ldots\}$, $\langle \cdot,\cdot \rangle$ is the usual inner product and  $\|\cdot\|$ is the Euclidean norm.  Let    $C\subset  \mathbb{R}^n $ be closed, convex and nonempty  set, the {\it projection and the reflection} are, respectively,   the maps    ${\PJ}_C:   \mathbb{R}^n \to C$ and  ${\RF}_C:   \mathbb{R}^n \to  \mathbb{R}^n$  defined by
\begin{equation} \label{eq:prdef}
{\PJ}_C(v):=  \arg\min_{z \in  C}\|v-z\|, \qquad   {\RF}_C(v):= 2{\PJ}_C(v)-v.
\end{equation} 
In the next lemma we present some important properties of the  projection mapping. 
\begin{lemma}\label{le:projeccion}
 Given a convex and closed set $C\subset \mathbb{R}^n$ and for all $v\in  \mathbb{R}^n$, the following properties hold:
\begin{enumerate}
\item[(i)]$\langle v-{\PJ}_C(v), z-{\PJ}_C(v)\rangle\leq0$, for all $z\in C$;
\item[(ii)] $\|{\PJ}_C(v)-z\|^2\leq \|v-z\|^2- \|{\PJ}_C(v)-v\|^2$, for all $z\in C$;  
\item[(iii)]  the projection mapping ${\PJ}_C$ is continuous.
\end{enumerate}
\end{lemma}
\begin{proof}
The items (i) and (iii)  are proved in  \cite[Proposition 3.10, Theorem 3.14]{BauschkeCombettes20011}. For item (ii),  combine  $ \|v-z\|^2=\|{\PJ}_C(v)-v\|^2+\|{\PJ}_C(v)-z\|^2-2\langle {\PJ}_C(v)-v, {\PJ}_C(v)-z\rangle$ with  item (i). 
\end{proof}
Let  $C,D \subset \mathbb{R}^n$ be  convex, closed,  and nonempty sets.  Let   $T: C \rightrightarrows D $ denotes  a set-valued operator that maps a point in $C$ to a subset of $D$ (i.e. $T(x)\subset  D$ for all $x \in  C$). In the case when $T(x)$ is a singleton for all $x \in C$, $T$ is said to be a single-valued mapping, which is denoted as $T: C \to D $, and write $T (x) = z$ whenever $T (x) = \{z\}$. For a  given  set-valued operator $T: C \rightrightarrows D $ and $y\in C$, we set $T(x)+y:=\{z+y:  z\in T(x) \}$. The set of fixed points of an operator $T$, denoted by ${\Fix}~T$, is defined  by ${\Fix}~T:=\{x\in C: ~x\in T(x)\}.$ The {\it identity operator} is the mapping $\id: \mathbb{R}^n\to \mathbb{R}^n$ that maps every point to itself.
Let us  recall the {\it Douglas-Rachford operator} associated to closed and  convex  sets $A$ and $B$, defined by 
\begin{equation}\label{eq:DR-Operator}
{\DR}_{A,B}(u)=  \frac{1}{2}\left({\RF}_B\circ {\RF}_A+\id \right)(u).
\end{equation}
The next proposition gives a relationship between the fixed point of the Douglas-Rachford operator and the solution set of the feasibility problem, its proof can be found in  \cite{artacho2020douglas2019}. 
\begin{proposition}\label{pr:cfp}
  Let $A,B\subset \mathbb{R}^n$ be  convex closed  sets. Then, $A\cap B \neq \varnothing$ if only if  ${\Fix}{\DR}_{A,B}\neq \varnothing$. Furthermore,  ${\PJ}_A(u) \in  A\cap B \neq \varnothing $ for all $u\in {\Fix}{\DR}_{A,B}$.
\end{proposition}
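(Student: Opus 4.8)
The plan is to prove both directions of the equivalence together with the ``furthermore'' claim by directly unfolding the fixed-point equation. The key preliminary observation is that $u\in{\Fix}\,{\DR}_{A,B}$ is equivalent to $u={\RF}_B({\RF}_A(u))$: indeed, from \eqref{eq:DR-Operator} the equation $u={\DR}_{A,B}(u)$ reads $2u={\RF}_B({\RF}_A(u))+u$, which simplifies at once to $u={\RF}_B({\RF}_A(u))$.

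First I would establish the implication ${\Fix}\,{\DR}_{A,B}\neq\varnothing \Rightarrow A\cap B\neq\varnothing$ together with the furthermore part, since both follow from the same computation. Fix $u\in{\Fix}\,{\DR}_{A,B}$ and set $a:={\PJ}_A(u)$, so that ${\RF}_A(u)=2a-u$ by \eqref{eq:prdef}. Applying the definition of ${\RF}_B$ gives ${\RF}_B({\RF}_A(u))=2{\PJ}_B(2a-u)-(2a-u)$; substituting this into $u={\RF}_B({\RF}_A(u))$ and simplifying yields $a={\PJ}_B(2a-u)$, i.e. ${\PJ}_A(u)={\PJ}_B({\RF}_A(u))$. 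Now $a={\PJ}_A(u)\in A$ by the range of the projection onto $A$, while $a={\PJ}_B({\RF}_A(u))\in B$ for the same reason, so $a\in A\cap B$. This proves ${\PJ}_A(u)\in A\cap B$ for every fixed point $u$, and in particular $A\cap B\neq\varnothing$.

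For the converse implication $A\cap B\neq\varnothing \Rightarrow {\Fix}\,{\DR}_{A,B}\neq\varnothing$, I would pick any $x^*\in A\cap B$ and verify directly that it is a fixed point. Since $x^*\in A$ we have ${\PJ}_A(x^*)=x^*$, whence ${\RF}_A(x^*)=2x^*-x^*=x^*$; since $x^*\in B$ we likewise get ${\RF}_B(x^*)=x^*$. Composing, ${\DR}_{A,B}(x^*)=\tfrac{1}{2}\bigl({\RF}_B({\RF}_A(x^*))+x^*\bigr)=\tfrac{1}{2}(x^*+x^*)=x^*$, so $x^*\in{\Fix}\,{\DR}_{A,B}$.

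I do not expect any serious obstacle here: the entire argument is a matter of substituting the definitions of the reflection and Douglas-Rachford operators and simplifying. The only step requiring a little care is the algebraic reduction of the fixed-point equation to the identity ${\PJ}_A(u)={\PJ}_B({\RF}_A(u))$; once that is in hand, membership of ${\PJ}_A(u)$ in both $A$ and $B$ is immediate from the fact that the projection maps take values in their respective sets. Since the statement is attributed to \cite{artacho2020douglas2019}, I would simply record this short self-contained computation rather than invoking external machinery.
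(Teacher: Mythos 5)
Your proof is correct, but there is nothing in the paper to compare it against line by line: the paper does not prove Proposition~\ref{pr:cfp} at all, it simply defers to \cite{artacho2020douglas2019}. What you have written is the self-contained verification that the paper outsources, and all three of your steps are sound: the reduction of $u\in{\Fix}{\DR}_{A,B}$ to $u={\RF}_B({\RF}_A(u))$ is immediate from \eqref{eq:DR-Operator}; unfolding both reflections with $a={\PJ}_A(u)$ gives $a={\PJ}_B(2a-u)$, which simultaneously establishes the ``furthermore'' clause ${\PJ}_A(u)\in A\cap B$ and the implication ${\Fix}{\DR}_{A,B}\neq\varnothing\Rightarrow A\cap B\neq\varnothing$; and the converse is the routine check that any $x^*\in A\cap B$ satisfies ${\RF}_A(x^*)=x^*$ and ${\RF}_B(x^*)=x^*$, hence is fixed by ${\DR}_{A,B}$. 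It is worth noting that the identity you isolate, ${\PJ}_A(u)={\PJ}_B(2{\PJ}_A(u)-u)$ for $u\in{\Fix}{\DR}_{A,B}$, is exactly the one the paper invokes without justification at the start of the proof of Proposition~\ref{pr:frfpfr} (and again inside Theorem~\ref{th:conver}), so your computation also fills that small gap and makes the paper self-contained; nothing in your argument uses more than single-valuedness of projections onto closed convex sets in $\mathbb{R}^n$, so it applies at the stated level of generality.
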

\section{Conditional gradient (CondG) method} \label{Sec:CondG}
In tis section   we recall the classical {\it conditional gradient method (CondG)}, see for example \cite{BeckTeboulle2004}, and  the concept of feasible inexact projection operator,  see \cite{deOliveiraFerreiraSilva2019}. Associated to the  feasible inexact projection operator, we introduce the concept of inexact reflection operator  and the inexact Douglas-Rachford  operator. We also present some    important  properties of these operators.  

For presenting  CondG method, we assume the existence of a linear optimisation oracle (or simply LO oracle) capable of minimising linear functions over the constraint set $C$.  We formally state the  CondG method  in  Algorithm~\ref{Alg:CondG}  to calculate an inexact projection of the point  $u\in {\mathbb R}^n$  onto a compact convex set $C$  relative   to ${y}\in C$ and  ${v}\in {\mathbb R}^n$ with    {\it forcing parameter}  $\epsilon \geq 0$.

\begin{algorithm}[ht] \label{Alg:CondG}
	\caption{{\bf CondG$_{C}$ method} ${y^+_C}\in  {\PJ}^{\epsilon}_C({y},{v}, u)$}
	\begin{algorithmic}[1]
	\STATE {Take $\epsilon>0$,  ${y}\in C$ and  ${v}, u\in {\mathbb R}^n$. Set $w_0={y}$ and  $\ell=0$.}
	\STATE{ Use a LO oracle to compute an optimal solution $z_\ell$ and the optimal value $s_{\ell}^*$ as
\begin{equation}\label{eq:CondG_{C}}
z_\ell := \arg\min_{z \in  C} \,\langle w_\ell-u, ~z-w_\ell\rangle,  \qquad s_{\ell}^*:=\langle  w_\ell-u, ~z_\ell-w_\ell \rangle.
\end{equation}}
	\STATE{If $-s^*_{\ell}\leq  \epsilon \|{y}-{v}\|^2$, then {\bf stop}, and  set ${{y^+_C}}:=w_\ell$. Otherwise,  set 
\begin{equation}\label{eq:stepsize}
w_{\ell+1}:=w_\ell+ \alpha_\ell(z_\ell-w_\ell), \qquad {\alpha}_\ell: =\min\Big\{1, \frac{-s^*_{\ell}}{\|z_\ell-w_\ell\|^2}  \Big\}.
\end{equation}}
        \STATE{ Set $\ell\gets \ell+1$, and go to Step 2.}
	\end{algorithmic}
\end{algorithm}
Let us  describe the main features of  CondG method; for further details,  see, for example,  \cite{BeckTeboulle2004, Jaggi2013, LanZhou2016}. Let  $u\in {\mathbb R}^n$,    $\psi_u: \mathbb{R}^n \to \mathbb{R}$ be defined by $\psi_u(z):= \|z -u\|^2/2$,  and  $C\subset  {\mathbb R}^n$ a convex compact set. The CondG method is a specialized version of the classical conditional gradient method  applied to the problem $\min_{z \in C}\psi_u(z)$. In this case,  \eqref{eq:CondG_{C}} is equivalent to $s_{\ell}^*:=\min_{z \in C}\langle \psi_u'(w_\ell) ,~z-w_\ell\rangle$. Since the function $\psi_u$ is convex we have $\psi_u(z)\geq \psi_u(w_\ell) + \langle \psi_u'(w_\ell) ,~z-w_\ell\rangle\geq    \psi_u(w_\ell)  +   s_{\ell}^*$, for all $z\in C$. Set  $ w_*:=\arg \min_{z \in C}\psi_u(z)$ and  $\psi^*:= \psi_u(w_*)$. Letting $z= w_*$ in the last inequality  we have $\psi_u(w_\ell)\geq \psi^* \geq \psi_u(w_\ell)  +   s_{\ell}^*$, which implies that $s_{\ell}^*\leq 0$. Thus, $
-s_{\ell}^*=\langle  u-w_\ell, ~z_\ell-w_\ell \rangle \geq 0\geq  \langle  u-w_*, ~z-w_* \rangle$,  for all $z\in C$. 
Therefore, we can set the  stopping criterion to $-s_{\ell}^*\leq \epsilon \|{y}-{v}\|^2$, which ensures that CondG will terminate after finitely many iterations. When the  CondG  method computes  $w_\ell \in C$ satisfying $-s_{\ell}^*\leq  \epsilon \|{y}-{v}\|^2 $, then the method terminates. Otherwise, it computes the stepsize $\alpha_\ell = \arg\min_{\alpha \in [0,1]} \psi_u(w_\ell + \alpha(z_\ell - w_\ell))$  using exact line search.  Since $z_\ell$, $w_\ell \in C$  and $C$ is convex, we conclude from  \eqref{eq:stepsize}  that $w_{\ell+1} \in C$, thus the   CondG method generates a sequence in $C$.  Finally,   \eqref{eq:CondG_{C}} implies that  $\langle  u-w_\ell, ~z-w_\ell\rangle\leq -s_{\ell}^*$, for all  $ z\in C$.  Hence, considering  the  stopping criterion      $-s_{\ell}^*\leq \epsilon \|{y}-{v}\|^2$, we conclude that {\it any  output of CondG method ${{y^+_C}}\in C$ is  a feasible inexact projection onto $C$} 
 of the point $u\in {\mathbb R}^n$ with respect to ${y}\in C$, ${v}\in {\mathbb R}^n$ and relative  error tolerance function   $\epsilon \|{y}-{v}\|^2$, i.e.,   
\begin{equation}\label{eq:IIPJ}
\langle  u-{{y^+_C}}, ~z-{{y^+_C}}\rangle\leq  \epsilon \|{y}-{v}\|^2 \qquad \forall~z\in C.
\end{equation}
Inspired by \eqref{eq:IIPJ}, in the following  we present  the  feasible inexact projection operator associated to Algorithm~\ref{Alg:CondG}, see \cite{deOliveiraFerreiraSilva2019}. 
\begin{definition} \label{def:InexactProjC}
 The {\it feasible inexact projection operator onto $C$} relative to ${y}\in C$ and  ${v}\in {\mathbb R}^n$ with   forcing parameter $\epsilon\geq 0$, denoted by ${\PJ}^{\epsilon}_C({y},{v}, \cdot): {\mathbb R}^n \rightrightarrows C$,  is defined as follows
	\begin{equation} \label{eq:Projw} 
		{\PJ}^{\epsilon}_C({y},{v}, u):= \left\{{y^+}\in C:~\langle  v-{y^+}, ~z-{y^+}\rangle\leq  \epsilon \|{y}-{v}\|^2 , ~ \forall~z\in C \right\}.
	\end{equation}
	Each point ${{y^+_C}}\in {\PJ}^{\epsilon}_C({x},{y}, u)$ is called a feasible inexact projection of $u$ onto $C$ relative to ${x}\in C$ and  ${y}\in {\mathbb R}^n$  with   forcing parameter $\epsilon\geq 0$.
\end{definition}
\begin{remark} \label{re:eipp}
Let $C$ be a  closed and convex set,   $v\in {\mathbb R}^n $,  ${y}\in C$,   ${v}\in {\mathbb R}^n$  and  $\epsilon\geq 0$. It follows from Definition~\ref{def:InexactProjC}, item $(i)$ of Lemma~\ref{le:projeccion} and   $\epsilon \geq 0$ that, ${\PJ}_C(u)\in{\PJ}^{\epsilon}_C({y},{v}, u)$. Hence,  ${\PJ}^{\epsilon}_C({y},{v}, u) \neq \varnothing$. For $\epsilon= 0$,   Lemma~\ref{le:projeccion} (i)  together with \eqref{eq:IIPJ}  implies  that  ${\PJ}_C(u)={\PJ}^{0}_C({y},{v}, u)$,  for all $u\in {\mathbb R}^n$, ${y}\in C$ and   ${v}\in {\mathbb R}^n$. It is worth to noting that  in order to  use    Algotithm~\ref{Alg:CondG} for projecting exact and inexact   onto  $C$ we need to assume that the   LO oracle can be used  effectively. In particular, we need to assume that the set  $C \subset \mathbb{R}^n$ is  compact. For an example where  LO oracle cannot be used  effectively, see  \cite{Rothvoss2017}.
\end{remark}
Below we present a particular counterpart of the  firm  non-expansiveness of the projection operator  to  feasible inexact projection operator.
\begin{proposition} \label{pr:snonexp}
  Let $u, v\in {\mathbb R}^n$,  ${y_C} \in C$  and   $ \epsilon\geq 0$. If     ${y^+_C}\in {\PJ}^{\epsilon}_C({y_C}, {v},u)$ and  ${\bar x_C}= {\PJ}_C({\bar w})$, then  
 \begin{equation*}
 \|{y^+_C}-{\bar x_C}\|^2\leq \|u-{\bar w}\|^2- \|(u-{y^+_C})-({\bar w}-{\bar x_C})\|^2  +2\epsilon\|{y_C}-{v}\|^2.
\end{equation*} 
\end{proposition}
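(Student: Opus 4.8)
The plan is to reduce the claimed estimate to a single inner-product inequality by expanding $\|u-{\bar w}\|^2$ about the two projection points. Writing
\[
u-{\bar w} = \big[(u-{y^+_C})-({\bar w}-{\bar x_C})\big] + ({y^+_C}-{\bar x_C}),
\]
and abbreviating $p:=(u-{y^+_C})-({\bar w}-{\bar x_C})$ and $q:={y^+_C}-{\bar x_C}$, the identity $\|u-{\bar w}\|^2=\|p\|^2+2\langle p,q\rangle+\|q\|^2$ shows that the desired inequality $\|q\|^2\le \|u-{\bar w}\|^2-\|p\|^2+2\epsilon\|{y_C}-{v}\|^2$ is equivalent to the single lower bound $\langle p,q\rangle \ge -\epsilon\|{y_C}-{v}\|^2$. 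Everything therefore comes down to establishing this bound on $\langle p,q\rangle$.

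First I would split the cross term along its two natural pieces:
\[
\langle p,q\rangle = \langle u-{y^+_C},\,{y^+_C}-{\bar x_C}\rangle - \langle {\bar w}-{\bar x_C},\,{y^+_C}-{\bar x_C}\rangle.
\]
For the first piece I would invoke the defining inequality of the feasible inexact projection, namely \eqref{eq:IIPJ} applied to ${y^+_C}\in{\PJ}^{\epsilon}_C({y_C},{v},u)$, tested at the feasible point $z={\bar x_C}\in C$; this yields $\langle u-{y^+_C},\,{\bar x_C}-{y^+_C}\rangle \le \epsilon\|{y_C}-{v}\|^2$, i.e.\ $\langle u-{y^+_C},\,{y^+_C}-{\bar x_C}\rangle \ge -\epsilon\|{y_C}-{v}\|^2$. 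For the second piece, since ${\bar x_C}={\PJ}_C({\bar w})$ is the exact projection, Lemma~\ref{le:projeccion}(i) tested at $z={y^+_C}\in C$ gives $\langle {\bar w}-{\bar x_C},\,{y^+_C}-{\bar x_C}\rangle \le 0$, so that its negative is nonnegative. Adding the two estimates produces $\langle p,q\rangle \ge -\epsilon\|{y_C}-{v}\|^2+0$, which is exactly the bound required.

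The argument is essentially bookkeeping; the only genuine choice — and hence the one ``hard'' step — is selecting the decomposition $u-{\bar w}=p+q$ so that the cross term separates cleanly into one part governed by the inexact projection inequality \eqref{eq:IIPJ} and one part governed by the exact projection's variational characterization in Lemma~\ref{le:projeccion}(i). Once that splitting is in place, the two inequalities match up term-by-term and the conclusion follows with no further computation. I expect no analytic obstacle here, only the need to keep the signs and the roles of $u$, ${\bar w}$, ${y^+_C}$, and ${\bar x_C}$ straight throughout.
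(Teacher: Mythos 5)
Your proof is correct and follows essentially the same route as the paper's: both arguments rest on testing the inexact projection inequality \eqref{eq:Projw} at $z={\bar x_C}$, testing Lemma~\ref{le:projeccion}(i) at $z={y^+_C}$, and expanding $\|u-{\bar w}\|^2$ (equivalently $\|(u-{y^+_C})-({\bar w}-{\bar x_C})\|^2$) to handle the cross term. The only difference is bookkeeping order --- you reduce the claim to the bound $\langle p,q\rangle\ge -\epsilon\|{y_C}-{v}\|^2$ first and then verify it, while the paper adds the two variational inequalities first and then substitutes the norm identity.
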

\begin{proof}
Since   ${y^+_C}\in {\PJ}^{\epsilon}_C({y_C}, {v},u)$ and  ${\bar x_C}= {\PJ}_C({\bar w})$,  it follows from \eqref{eq:Projw} and Lemma~\ref{le:projeccion} that
$$
 \langle  {y^+_C}-u, ~{y^+_C}-{\bar x_C}\rangle\leq  \epsilon\|{y_C}-{v}\|^2,  \qquad \qquad  \langle  {\bar w}-{\bar x_C}, ~{y^+_C}-{\bar x_C}\rangle\leq 0.
$$ 
By adding the last two inequalities,   some algebraic manipulations yield 
\begin{equation*}
-\langle  u-{\bar w}, ~{y^+_C}-{\bar x_C}\rangle \leq  -\|{y^+_C}-{\bar x_C}\|^2 + \epsilon\|{y_C}-{v}\|^2.
\end{equation*}
Since  $\|(u-{y^+_C})-({\bar w}-{\bar x_C})\|^2=\|u-{\bar w}\|^2-2\langle u-{\bar w}, {y^+_C}-{\bar x_C}\rangle +\|{y^+_C}-{\bar x_C}\|^2$, the desired inequality follows by combination with the last inequality.
\end{proof}
\begin{proposition}\label{prop:dist}
Let $u, v\in {\mathbb R}^n$,  ${y_C} \in C$ and   $ \epsilon\geq 0$. If     ${y^+_C}\in {\PJ}^{\epsilon}_C({y_C}, {v},u)$, then   $\|{y^+_C}-P_C(u)\|\leq \sqrt{2\epsilon}\|{y_C}- {v}\|$.  
\end{proposition}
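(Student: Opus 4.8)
The plan is to read off the estimate as a one-line specialization of the firm-nonexpansiveness-type inequality in Proposition~\ref{pr:snonexp}. That inequality holds for an \emph{arbitrary} evaluation point ${\bar w}$ together with its exact projection ${\bar x_C}={\PJ}_C({\bar w})$, and the key observation is that ${\bar w}$ is free to choose. I would take ${\bar w}=u$, the very point being inexactly projected, so that ${\bar x_C}={\PJ}_C(u)$. With this choice the conclusion of Proposition~\ref{pr:snonexp} becomes
\[
\|{y^+_C}-{\PJ}_C(u)\|^2 \leq \|u-u\|^2 - \|(u-{y^+_C})-(u-{\PJ}_C(u))\|^2 + 2\epsilon\|{y_C}-{v}\|^2 .
\]

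Next I would simplify the two terms produced by the substitution ${\bar w}=u$. The first term vanishes, $\|u-u\|^2=0$, and the vector inside the second norm telescopes, $(u-{y^+_C})-(u-{\PJ}_C(u))={\PJ}_C(u)-{y^+_C}$, so that $\|(u-{y^+_C})-(u-{\PJ}_C(u))\|^2=\|{y^+_C}-{\PJ}_C(u)\|^2$. Substituting these back gives
\[
\|{y^+_C}-{\PJ}_C(u)\|^2 \leq -\|{y^+_C}-{\PJ}_C(u)\|^2 + 2\epsilon\|{y_C}-{v}\|^2 .
\]

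To finish, I would simply discard the nonpositive term $-\|{y^+_C}-{\PJ}_C(u)\|^2$ on the right, obtaining $\|{y^+_C}-{\PJ}_C(u)\|^2 \leq 2\epsilon\|{y_C}-{v}\|^2$, and then take square roots to reach the claimed bound $\|{y^+_C}-{\PJ}_C(u)\|\leq \sqrt{2\epsilon}\,\|{y_C}-{v}\|$. I note in passing that \emph{keeping} that term and moving it to the left-hand side yields the sharper inequality $2\|{y^+_C}-{\PJ}_C(u)\|^2\leq 2\epsilon\|{y_C}-{v}\|^2$, i.e.\ the constant $\sqrt{\epsilon}$; since $\sqrt{\epsilon}\leq\sqrt{2\epsilon}$, either route establishes the statement.

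There is no genuine analytical obstacle here: once the choice ${\bar w}=u$ is made, the argument is purely algebraic. The only real decision is recognizing that this choice is what collapses the right-hand side and makes the squared distance $\|{y^+_C}-{\PJ}_C(u)\|^2$ appear on both sides. The one point I would verify carefully is that the sign conventions in the defining inequality \eqref{eq:Projw} and in the proof of Proposition~\ref{pr:snonexp} are carried through consistently, so that the forcing constant really comes out as $2\epsilon$; beyond the standing hypotheses $\epsilon\geq 0$ and ${y^+_C}\in{\PJ}^{\epsilon}_C({y_C},{v},u)$, no further lemma is needed.
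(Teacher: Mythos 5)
Your proof is correct, and it takes a genuinely different (and slicker) route than the paper. The paper proves this proposition directly from Definition~\ref{def:InexactProjC}: it tests the defining inequality at $z={\PJ}_C(u)$, expands $\|u-{y^+_C}\|^2$ by the three-point identity to get $\|{y^+_C}-z\|^2+\|{y^+_C}-u\|^2-\|z-u\|^2\leq 2\epsilon\|{y_C}-{v}\|^2$, and then discards the nonnegative quantity $\|{y^+_C}-u\|^2-\|{\PJ}_C(u)-u\|^2$ using the minimality of the exact projection distance. You instead specialize the already-established Proposition~\ref{pr:snonexp} at ${\bar w}=u$, which is legitimate since ${\bar w}$ there is an arbitrary point of ${\mathbb R}^n$ and that proposition's proof places no restriction on it; the right-hand side then collapses and the squared distance appears on both sides. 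The two arguments rest on slightly different properties of the exact projection --- yours (through Proposition~\ref{pr:snonexp}) uses the variational characterization $\langle u-{\PJ}_C(u),\,{y^+_C}-{\PJ}_C(u)\rangle\leq 0$ of Lemma~\ref{le:projeccion}(i), while the paper uses the minimizing property $\|{\PJ}_C(u)-u\|\leq\|{y^+_C}-u\|$ --- but both ultimately test the inexact-projection inequality at $z={\PJ}_C(u)$. What your route buys is brevity (a one-line corollary rather than a fresh computation) and, as you observe, the sharper constant $\sqrt{\epsilon}$ if the negative term is retained rather than discarded; the paper's route buys self-containedness, proving the estimate without invoking the firm-nonexpansiveness result, though it too could be sharpened to $\sqrt{\epsilon}$ by invoking Lemma~\ref{le:projeccion}(ii) instead of dropping the term.
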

\begin{proof}
Since  ${y^+_C}\in {\PJ}^{\epsilon}_C({y_C}, {v},u)$ , using Definition~\ref{def:InexactProjC}   we have   $\langle u-{y^+_C},z-{y^+_C}\rangle\leq \epsilon\|{y_C}- {v}\|^2$,  for all $z\in C$. Thus, some algebraic manipulations give $\|{y^+_C}-z\|^2+\langle u-z,z-{y^+_C} \rangle\leq  \epsilon\|{y_C}- {v}\|^2$, for all 
 $z\in C$. Since  $\|u-{y^+_C}\|^2=\|u-z\|^2+2\langle u-z,z-{y^+_C}\rangle +\|z-{y^+_C}\|^2$,  for all $z\in C$,  we have  
\begin{equation*}
\|{y^+_C}-z\|^2+\frac{1}{2}(\|{y^+_C}-u\|^2-\|u-z\|^2-\|z-{y^+_C}\|^2)\leq  \epsilon\|{y_C}- {v}\|^2, \qquad  \forall z\in C, 
\end{equation*}
which is equivalent to $\|{y^+_C}-z\|^2+\|{y^+_C}-u\|^2-\|z-u\|^2\leq 2\epsilon\|{y_C}- {v}\|^2$,  for all $z\in C$.
Substituting $z=P_A(u)$  into the last inequality  and using $\|P_A(u)-u\|\leq \|{y^+_C}-u\|$ the desired result follows.
\end{proof}
Next we introduce the concept of  inexact reflection operator  associated to a convex set  $C$.
\begin{definition} \label{def:InexactRop}
  The inexact reflection operator  associated to $C$ relative to ${y_C}\in C$ and  ${v}\in {\mathbb R}^n$  with   forcing parameter $\epsilon\geq 0$,  denoted by ${\RF}^{\epsilon}_C({y_C}, v,\cdot): {\mathbb R}^n \rightrightarrows {\mathbb R}^n$,  is a set-valued mapping defined as follows ${\RF}^{\epsilon}_C({y_C}, v,u):=\{ 2{y^+_C}-u:~ {y^+_C}\in {\PJ}^{\epsilon}_C({y_C}, {v},u)\}$, or equivalently, 
\begin{equation} \label{eq:IRF}
{\RF}^{\epsilon}_C({y_C}, v,u):= 2{\PJ}^{\epsilon}_C({y_C}, {v},u)-u.
\end{equation}
	Each point belonging to  ${\RF}^{\epsilon}_C({y_C}, v,u)$ is called a inexact reflection of ${u}$ with respect  $C$ relative to ${y_C} \in C$ and  ${v}\in {\mathbb R}^n$  with   forcing parameter $\epsilon\geq 0$.
\end{definition}
In the following  we present a particular  counterpart to the  non-expansiveness of the reflection  operator   to inexact reflection operator.
\begin{proposition} \label{pr:nonexpr}
  Let $u, v\in {\mathbb R}^n$,  ${y_C}, {\bar x_C}\in C$,   ${v}\in {\mathbb R}^n$,  $ \epsilon\geq 0$ and  $\delta \geq 0$. If     ${y^+_C}\in {\PJ}^{\epsilon}_C({y_C}, {v},u)$ and  ${\bar x_C}= {\PJ}_C({\bar w})$, then  $2{y^+_C}-u\in {\RF}^{\epsilon}_C({y_C}, v,u)$,  $2{\bar x_C}-{\bar w}\in {\RF}({\bar w})$, and 
 \begin{equation*}
\|(2{y^+_C}-u)-(2{x_C}-{\bar w})\|^2\leq  \|u-{\bar w}\|^2+  4\epsilon\|{y_C}- {v}\|^2.
\end{equation*} 
\end{proposition}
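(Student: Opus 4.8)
The plan is to reduce everything to Proposition~\ref{pr:snonexp}, which already encodes the firm non-expansiveness of the inexact projection. First, the two membership claims are immediate and require no computation: since ${y^+_C}\in {\PJ}^{\epsilon}_C({y_C},{v},u)$, the definition \eqref{eq:IRF} in Definition~\ref{def:InexactRop} gives $2{y^+_C}-u\in {\RF}^{\epsilon}_C({y_C},v,u)$, and since ${\bar x_C}={\PJ}_C({\bar w})$, the definition of the reflection in \eqref{eq:prdef} gives $2{\bar x_C}-{\bar w}={\RF}_C({\bar w})$. So only the norm inequality needs work.

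For the inequality I would introduce the shorthand $p:={y^+_C}-{\bar x_C}$ and $q:=u-{\bar w}$, so that the quantity to be bounded is $\|(2{y^+_C}-u)-(2{\bar x_C}-{\bar w})\|^2=\|2p-q\|^2$, while the error-carrying term appearing in Proposition~\ref{pr:snonexp} is $\|(u-{y^+_C})-({\bar w}-{\bar x_C})\|^2=\|q-p\|^2$. Expanding the two quadratics gives $\|2p-q\|^2=4\|p\|^2-4\langle p,q\rangle+\|q\|^2$ and $\|q-p\|^2=\|p\|^2-2\langle p,q\rangle+\|q\|^2$.

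Next I would substitute the latter expansion into Proposition~\ref{pr:snonexp}, which in the present notation reads $\|p\|^2\leq \|q\|^2-\|q-p\|^2+2\epsilon\|{y_C}-{v}\|^2$. After cancellation this simplifies to $2\|p\|^2-2\langle p,q\rangle\leq 2\epsilon\|{y_C}-{v}\|^2$, that is, the cross term satisfies $\|p\|^2-\langle p,q\rangle\leq \epsilon\|{y_C}-{v}\|^2$. Recognizing that $\|2p-q\|^2=4\bigl(\|p\|^2-\langle p,q\rangle\bigr)+\|q\|^2$, I would conclude $\|2p-q\|^2\leq 4\epsilon\|{y_C}-{v}\|^2+\|u-{\bar w}\|^2$, which is exactly the asserted bound.

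The proof presents no genuine obstacle: the entire content is the firm non-expansiveness already captured by Proposition~\ref{pr:snonexp}, and the one point requiring minor care is the bookkeeping that tracks how the quadratic $\|2p-q\|^2$ decomposes, so that the quantity $\|p\|^2-\langle p,q\rangle$ — precisely the one controlled by the firm non-expansiveness estimate — is isolated with the correct factor of $4$, thereby converting the $2\epsilon$ in Proposition~\ref{pr:snonexp} into the $4\epsilon$ of the present statement. (The parameter $\delta\geq 0$ in the hypotheses is vacuous here and plays no role.)
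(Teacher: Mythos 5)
Your proof is correct and follows essentially the same route as the paper: both arguments reduce the inequality to Proposition~\ref{pr:snonexp} via an expansion of the quadratic $\|(2{y^+_C}-u)-(2{\bar x_C}-{\bar w})\|^2$, differing only in bookkeeping (the paper states the expansion as a single identity and substitutes, while you first isolate the cross-term bound $\|p\|^2-\langle p,q\rangle\leq \epsilon\|{y_C}-{v}\|^2$). The membership claims are likewise handled identically, directly from Definition~\ref{def:InexactRop} and \eqref{eq:prdef}.
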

\begin{proof}
The first inclusion follows from Definition~\ref{def:InexactRop} and the second from \eqref{eq:prdef}.
 To prove the inequality, first note that  direct computation yields 
$$
\|(2{y^+_C}-u)-(2{\bar x_C}-{\bar w})\|^2=\|u-{\bar w}\|^2+2\left(\|{y^+_C}-{\bar x_C}\|^2-\|u-{\bar w}\|^2+  \|(u-{y^+_C})-({\bar w}-{\bar x_C})\|^2\right).
$$
Since     ${y^+_C}\in {\PJ}^{\epsilon}_C({y_C}, {v},u)$ and  ${\bar x_C}= {\PJ}_C({\bar w})$,  the desired inequality follows from the last equality together with   Proposition~\ref{pr:snonexp}. 
\end{proof}

In order to introduce  the approximate Douglas-Rachford algorithm in the next section,  it is necessary to introduce first the  approximate Douglas-Rachford (ApDR) operator.  
\begin{definition} \label{def:InexactDRop}
Let  $A, B \subset \mathbb{R}^n$ be  closed convex sets.  The  inexact Douglas-Rachford (IDR) operator associated to the sets $A$ and $B$ relative to ${y_A} \in A$ and    ${y_B} \in B$   with   forcing parameters  $\epsilon\geq 0$ and $\delta \geq 0$,  denoted by ${\DR}^{\epsilon, \delta}_{A,B}({y_A},{y_B}, \cdot): {\mathbb R}^n \rightrightarrows {\mathbb R}^n$,  is a set-valued mapping defined as follows 
\begin{equation} \label{eq:IDR1}
{\DR}^{\epsilon, \delta}_{A,B}({y_A},{y_B}, u):=\big\{ u+{y_B^+}-{y_A^+}:~{y_A^+}\in  {\PJ}^{\epsilon}_A({y_A},{y_B}, u), ~{y_B^+}\in {\PJ}^{\delta}_B({y_B},{y_A},2{y_A^+}-u)\big\}.
\end{equation}
or equivalently, 
\begin{equation} \label{eq:IDR2}
{\DR}^{\epsilon, \delta}_{A,B}({y_A},{y_B}, u):= \frac{1}{2}\left( {\RF}^{\delta}_B\big({y_B},{y_A},{\RF}^{\epsilon}_A({y_A},{y_B}, u)\big)+u\right).
\end{equation}
\end{definition}
\begin{remark}  \label{re:pctab}
 For $\epsilon=0$,  it follows from Definition~\ref{def:InexactProjC}  that  ${\PJ}_C(u)={\PJ}^{0}_C({y_A},{y_B},u)$, for all $u\in {\mathbb R}^n$, for all ${y_A} \in C$ and    ${y_B} \in {\mathbb R}^n$. Thus, the second equality in \eqref{eq:prdef} and  \eqref{eq:IRF} imply that that  ${\RF}_C(u)={\RF}^{0}_C({y_A},{y_B},  u)$, for all $u\in {\mathbb R}^n $, ${y_A} \in C$ and    ${y_B} \in {\mathbb R}^n$.   Furthermore, it follows from \eqref{eq:DR-Operator} and \eqref{eq:IDR2} that ${\DR}_{A,B}(u)={\DR}^{0, 0}_{A,B}({y_A},{y_B},u)$,  for all $u\in {\mathbb R}^n $, ${y_A}\in A$ and ${y_B}\in B$.
\end{remark}
We end this section with an important result that we need to  prove our main result. 
\begin{proposition}\label{pr:frfpfr}
Let $A,B\subset \mathbb{R}^n$ be  convex closed sets such that $A\cap B \neq \varnothing$, ${y}_A \in A$ and    ${y}_B \in B$,  $\epsilon\geq 0$ and $\delta\geq 0$.  Take  $x \in \mathbb{R}^n$, ${y_A^+}\in  {\PJ}^{\epsilon}_A({y}_A,{y}_B, x)$, ${y_B^+}\in {\PJ}^{\delta}_B({y}_B,{y}_A,2{y_A^+}-x)$, and set  $x^{+}:=x+{y_B^+}-{y_A^+}$.  If  ${\bar x}\in {\Fix}{\DR}_{A,B}$, then 
 \begin{equation*}
 \|x^{+}-{\bar x}\|^2\leq \|x-{\bar x}\|^2-\|x-x^+\|^2 +2(\epsilon+\delta)\|{y}_A-{y}_B\|^2.
 \end{equation*} 
\end{proposition}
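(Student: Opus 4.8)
The plan is to measure both inexact projections against the single reference point $\bar a := {\PJ}_A(\bar x)$, so that the two instances of Proposition~\ref{pr:snonexp} can be added cleanly. First I would exploit the fixed-point hypothesis. Since ${\DR}_{A,B}$ is single-valued, $\bar x\in{\Fix}{\DR}_{A,B}$ gives $\bar x=\frac12\big({\RF}_B{\RF}_A(\bar x)+\bar x\big)$, hence ${\RF}_B{\RF}_A(\bar x)=\bar x$. Writing ${\RF}_A(\bar x)=2\bar a-\bar x$ and expanding ${\RF}_B(2\bar a-\bar x)=\bar x$ yields the key identity ${\PJ}_B(2\bar a-\bar x)=\bar a$; together with Proposition~\ref{pr:cfp} this also records $\bar a\in A\cap B$. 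The point of this step is that $\bar a$ is simultaneously the exact projection of $\bar x$ onto $A$ and of $2\bar a-\bar x$ onto $B$, which are exactly the two ``$\bar x_C={\PJ}_C(\bar w)$'' hypotheses needed below.

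Next I would apply Proposition~\ref{pr:snonexp} twice. The first application uses $C=A$, $y_C=y_A$, $v=y_B$, $u=x$, $\bar w=\bar x$, and $\bar x_C={\PJ}_A(\bar x)=\bar a$; since $y_A^+\in{\PJ}^{\epsilon}_A(y_A,y_B,x)$ it gives
\[
\|y_A^+-\bar a\|^2\leq \|x-\bar x\|^2-\|(x-y_A^+)-(\bar x-\bar a)\|^2+2\epsilon\|y_A-y_B\|^2.
\]
The second application uses $C=B$, $y_C=y_B$, $v=y_A$, $u=2y_A^+-x$, $\bar w=2\bar a-\bar x$, and $\bar x_C={\PJ}_B(2\bar a-\bar x)=\bar a$ (here the identity from the first paragraph is essential); since $y_B^+\in{\PJ}^{\delta}_B(y_B,y_A,2y_A^+-x)$ it gives
\[
\|y_B^+-\bar a\|^2\leq \|(2y_A^+-x)-(2\bar a-\bar x)\|^2-\|((2y_A^+-x)-y_B^+)-(\bar a-\bar x)\|^2+2\delta\|y_A-y_B\|^2.
\]

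Finally I would introduce the shorthand $p=x-\bar x$, $q=y_A^+-\bar a$, $r=y_B^+-\bar a$, so that $x^+-\bar x=p+r-q$ and $x-x^+=q-r$. Substituting these into the two displayed bounds, expanding the squared norms, and adding them, every offset $\bar a-\bar x$ drops out of the difference terms (this is precisely why the common reference point $\bar a$ matters), and the two inequalities reduce respectively to $\|q\|^2-\langle p,q\rangle\leq\epsilon\|y_A-y_B\|^2$ and $\|r\|^2-2\langle q,r\rangle+\langle p,r\rangle\leq\delta\|y_A-y_B\|^2$; summing gives $\|q-r\|^2+\langle p,r-q\rangle\leq(\epsilon+\delta)\|y_A-y_B\|^2$. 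Since $\|x^+-\bar x\|^2=\|p\|^2+2\langle p,r-q\rangle+\|r-q\|^2$ and $\|x-x^+\|^2=\|q-r\|^2$, this last inequality is algebraically equivalent to the claim. I expect the main obstacle to be not the concluding bookkeeping but the structural step of the first paragraph: one must extract ${\PJ}_B(2\bar a-\bar x)=\bar a$ from the fixed-point condition so that both inexact projections are compared against the \emph{same} $\bar a$; without this common reference point the two bounds from Proposition~\ref{pr:snonexp} would not add up, and the squared cross terms would fail to reassemble into $-\|x-x^+\|^2$.
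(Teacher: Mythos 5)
Your proof is correct, and it shares its skeleton with the paper's: both arguments extract from $\bar x\in{\Fix}{\DR}_{A,B}$ the identity ${\PJ}_A(\bar x)={\PJ}_B(2{\PJ}_A(\bar x)-\bar x)$, so that both inexact projections are compared against the \emph{same} reference point $\bar a$, and both then make two non-expansiveness estimates with exactly your choices of $u$, $v$, $\bar w$, $\bar x_C$. The difference is the level at which the estimate is invoked and how the algebra is finished. The paper applies Proposition~\ref{pr:nonexpr} (inexact \emph{reflections}) twice, chains the two bounds into $\|(2y_B^+-(2y_A^+-x))-\bar x\|^2\le\|x-\bar x\|^2+4(\epsilon+\delta)\|y_A-y_B\|^2$, and then converts this into the claim via the averaging identity $\|x^+-\bar x\|^2+\|x-x^+\|^2=\frac{1}{2}\|x-\bar x\|^2+\frac{1}{2}\|2y_B^+-(2y_A^+-x)-\bar x\|^2$, which mirrors the definition ${\DR}_{A,B}=\frac{1}{2}({\RF}_B\circ {\RF}_A+\id)$. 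You instead apply Proposition~\ref{pr:snonexp} (inexact \emph{projections}) twice and carry out the equivalent bookkeeping by hand in the variables $p,q,r$; since Proposition~\ref{pr:nonexpr} is itself obtained from Proposition~\ref{pr:snonexp} by an algebraic identity, your argument amounts to unfolding the reflection layer of the paper's proof. What the paper's route buys is conceptual transparency: approximate non-expansiveness of each reflection composes, and averaging then yields the claimed inequality. What your route buys is self-containedness: you never need the reflection operator or Proposition~\ref{pr:nonexpr}, and your reduction to the two inner-product inequalities $\|q\|^2-\langle p,q\rangle\le\epsilon\|y_A-y_B\|^2$ and $\|r\|^2-2\langle q,r\rangle+\langle p,r\rangle\le\delta\|y_A-y_B\|^2$ (which I checked, along with the final equivalence) makes the cancellation of the offset $\bar a-\bar x$ fully explicit.
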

\begin{proof}
Since ${\bar x}\in {\Fix}{\DR}_{A,B}$ we have ${\PJ}_A({\bar x})= {\PJ}_B(2 {\PJ}_A({\bar x})-{\bar x})$. Set ${\bar x}_A=  {\PJ}_A({\bar x})$ and   ${\bar x}_B= {\PJ}_B(2{\bar x}_A-{\bar x})$. In this case, ${\bar x}_A={\bar x}_B$.   Due to  ${y_B^+}\in {\PJ}^{\delta}_B({y}_B,{y}_A,2{y_A^+}-x)$ and  ${\bar x}_B= {\PJ}_B(2{\bar x}_A-{\bar x})$,  Proposition~\ref{pr:nonexpr}  with  $C=B$,  $\epsilon=\delta$, $u=2{y_A^+}-x$,  $v={y_A}$, ${\bar x_C}={\bar x}_B$ and ${\bar w}=2{\bar x}_A-{\bar x}$ yields   
 \begin{equation*}
\|(2{y^+_B}-(2{y_A^+}-x))-(2{\bar x_B}-2({\bar x}_A-{\bar x})\|^2\leq   \|(2{y_A^+}-x)-(2{\bar x}_A-{\bar x})\|^2+  4\delta\|{y}_B-{y}_A\|^2.
\end{equation*} 
 Thus, taking into account that    ${\bar x}_A={\bar x}_B$ we we conclude that 
 \begin{equation*}
\|(2{y^+_B}-(2{y_A^+}-x))-{\bar x}\|^2\leq  \|(2{y_A^+}-x)-(2{\bar x}_A-{\bar x})\|^2+  4\delta\|{y}_B-{y}_A\|^2.
\end{equation*} 
Considering that ${y_A^+}\in  {\PJ}^{\epsilon}_A({y}_A,{y}_B, x)$ and ${\bar x}_A=  {\PJ}_A({\bar x})$  we  apply again    Proposition~\ref{pr:nonexpr} with  $C=A$, $u=x$,  $v={y_B}$, ${\bar x_C}={\bar x}_A$ and ${\bar w}={\bar x}$  to obtain 
 \begin{equation*}
\|(2{y^+_A}-x)-(2{x_A}-{\bar x})\|^2\leq  \|x-{\bar x}\|^2+  4\epsilon\|{y_A}- {y_B}\|^2.
\end{equation*} 
Hence, combining two previous inequality we conclude that 
 \begin{equation} \label{eq:ieqtp}
\|(2{y^+_B}-(2{y_A^+}-x))-{\bar x}\|^2\leq  \|x-{\bar x}\|^2+  4(\epsilon+\delta)\|{y}_B-{y}_A\|^2.
\end{equation} 
On the other  hand,   $x^{+}-{\bar x}=\frac{1}{2}(x-{\bar x})+\frac{1}{2}\left(2{y_B^+}-(2{y_A^+}-x)-{\bar x}]\right).$
Thus, direct computation shows that
\begin{equation} \label{eq:cfps1}
 \|x^{+}-{\bar x}\|^2=\frac{1}{4}\|x-{\bar x}\|^2+\frac{1}{2}\langle x-{\bar x}, 2{y_B^+}-(2{y_A^+}-x)-{\bar x}]\rangle +  \frac{1}{4}\|2{y_B^+}-(2{y_A^+}-x)-{\bar x}]\|^2.
\end{equation} 
Since $ x-x^{+}=\frac{1}{2}(x-{\bar x})-\frac{1}{2}\left([2{y_B^+}-(2{y_A^+}-x)]-{\bar x}]\right)$, we have 
\begin{equation} \label{eq:cfps2}
 \|x-x^{+}\|^2=\frac{1}{4}\|x-{\bar x}\|^2-\frac{1}{2}\langle x-{\bar x}, 2{y_B^+}-(2{y_A^+}-x)-{\bar x}]\rangle +  \frac{1}{4}\|2{y_B^+}-(2{y_A^+}-x)-{\bar x}]\|^2.
\end{equation}
Therefore, summing equalities \eqref{eq:cfps1} and \eqref{eq:cfps2} we conclude that 
\begin{equation*} 
 \|x^{+}-{\bar x}\|^2+\|x-x^{+}\|^2=  \frac{1}{2}\|x-{\bar x}\|^2+\frac{1}{2}\|2{y_B^+}-(2{y_A^+}-x)-{\bar x}]\|^2, 
\end{equation*}
Therefore,  using   \eqref{eq:ieqtp} we have  $ \|x^{+}-{\bar x}\|^2+\|x-x^{+}\|^2=  \|x-{\bar x}\|^2+  2(\epsilon+\delta)\|{y}_B-{y}_A\|^2$, 
which is equivalent to the desired inequality, and the proof is concluded. 
\end{proof}
\section{Approximate Douglas-Rachford algorithm} \label{se:DR-Agorithm}
The aim of this section is to present  a new algorithm for solving the classic feasibility problem, which we name  Approximate Douglas-Rachford algorithm.   Before presenting the approximate  Douglas-Rachford algorithm, let us first recall the classical Douglas-Rachford algorithm.   At each step of the classic Douglas--Rachford algorithm the previous iterate is first \emph{reflected} through $A$, then \emph{reflected} through  $B$, and finally the resulting point is averaged with the previous iterate. In this case the iterative sequence $\{x^k\}_{k \in \mathbb{N}}$ is defined as 
\begin{equation}\label{eq:DR-step}
x^{k+1} :=    \frac{1}{2}\left( {\RF}_B\big( {\RF}_A(x^k)\big)+x^k\right) =  \frac{1}{2}\left({\RF}_B\circ {\RF}_A+\id \right)(x^k), \qquad  \forall k \in \mathbb{N}.
\end{equation}
Then conceptual {\it approximate  Douglas-Rachford  (ApDR) algorithm}  is stated in Algorithm~\ref{Alg:IDRA}. 

\begin{algorithm}[ht] \label{Alg:IDRA}
	\caption{Approximate Douglas-Rachford  (ApDR) algorithm}
	\begin{algorithmic}[1]
	\STATE {Let   $(\epsilon_k)_{k\in\mathbb{N}}$ and  $(\delta_k)_{k\in\mathbb{N}}$   be  sequences of non-negative real numbers,  $y^0_A\in A$, $y^0_B\in B$ and $x^1\in \R^n$. Set $k=1$.}
	\STATE{  Compute  
\begin{equation} \label{eq:cpAB}
y^k_A \in   {\PJ}^{\epsilon_k}_A\big(y^{k-1}_A,y^{k-1}_B, x^k\big), \qquad y^k_B \in   {\PJ}^{\delta_k}_B\big(y^{k-1}_B,y^{k-1}_A, 2y^k_A-x^k\big).
\end{equation}
 If   $y_B^k=y_A^k$, then {\bf stop}.  Otherwise,  set the next iterate  $x^{k+1}$ as follows  
\begin{equation} \label{eq2:xk+1)}
 x^{k+1}=x^k+y_B^k-y_A^k. 
\end{equation} }
	\STATE{Set $k\gets k+1$, and go to Step~2.}
	\end{algorithmic}
\end{algorithm}
Let us  describe the main features of  Algorithm \ref{Alg:IDRA}. In {Step 1} to compute  the next iterate  $x^{k+1}$ in \eqref{eq2:xk+1)} we need first to compute $y^k_A $ and $y^k_B$ satisfying\eqref{eq:cpAB}. Then,   it follows from  \eqref{eq:IDR1} that  the next iterate $ x^{k+1}$ satisfies 
\begin{equation} \label{eq:Next}
 x^{k+1}\in  {\DR}^{\epsilon_k, \delta_k}_{A,B}\big(y_A^{k-1}, y_B^{k-1},x^k\big).
\end{equation}
If Algorithm \ref{Alg:IDRA}  {stops},  then  $y_B^k=y_A^k$, and   \eqref{eq2:xk+1)} implies that $x^{k+1}=x^{k}$. Hence, it follows from  \eqref{eq:Next} that  $ x^{k}\in  {\DR}^{\epsilon_k, \delta_k}_{A,B}\big(y_A^{k-1}, y_B^{k-1},x^k\big)$.  Therefore, Proposition~\ref{pr:cfp} implies that $y_A^k=y_B^k\in A\cap B$ and we have a solution.
\begin{remark}
If $\epsilon_k= 0$ and $\delta_k= 0$ then  Remark~\ref{re:eipp}  and \eqref{eq:cpAB} imply that  $y^k_A =  {\PJ}_A(x^k)$  and $y^k_B =  {\PJ}_B(2y^k_A-x^k)$. Then, it follows from  \eqref{eq2:xk+1)}  that 
$$
x^{k+1}= x^k+  {\PJ}_B(2 {\PJ}_A(x^k)-x^k)- {\PJ}_A(x^k).
$$
We proceed to show that last equality is equivalent to \eqref{eq:DR-step}. First note that some algebraic manipulations show that 
$$
x^k+  {\PJ}_B(2 {\PJ}_A(x^k)-x^k)- {\PJ}_A(x^k)= \frac{1}{2}\left( 2 {\PJ}_B(2 {\PJ}_A(x^k)-x^k)-(2 {\PJ}_A(x^k)-x^k)+x^k\right).
$$
Hence, using the last equality and the definition of the reflection ${\RF}_A(x^k)$  we conclude that 
$$
x^{k+1}  =  \frac{1}{2}\left( 2 {\PJ}_B({\RF}_A(x^k))-{\RF}_A(x^k) +x^k\right), 
$$
and  by using  the definition of  ${\RF}_B(x^k)$, the last inequality   becomes \eqref{eq:DR-step}.  Therefore, if $\epsilon_k= 0$ and $\delta_k= 0$  then   Conditional  Douglas-Rachford algorithm retrieve the classical Douglas-Rachford algorithm. 
\end{remark}
\begin{remark}
  It follows from Definition~\ref{def:InexactProjC}, the item $(i)$ of Lemma~\ref{le:projeccion},   $\epsilon_k \geq 0$ and $\delta_k \geq 0$ that ${\PJ}_A(x^k)\in    {\PJ}^{\epsilon_k}_A\big(y^{k-1}_A,y^{k-1}_B, x^k\big)$ and ${\PJ}_B(2y^k_A-x^k) \in  {\PJ}^{\delta_k}_B\big(y^{k-1}_B,y^{k-1}_A, 2y^k_A-x^k\big)$, i.e.,  the inexact projection also  accepts an exact one. Therefore,  if the exact projection onto $A$ or onto $B$  is easy to compute then we do not need to use  Algorithm \ref{Alg:CondG} to do that task in obtaining \eqref{eq:cpAB} in the respective set. For instance, the exact projections onto a hyperplane,  box constraint or Lorentz cone is very easy to obtain.  For example,  see  \cite[p. 520]{NocedalWright2006} and \cite[Proposition~3.3]{FukushimaTseng2002}.
\end{remark}
In the following we  present our main result related the convergence of  Algorithm~\ref{Alg:IDRA}.
\begin{theorem}\label{th:conver}
Let $\{x^k\}_{k \in \mathbb{N}^*}$ be  a sequence generated by Algorithm~\ref{Alg:IDRA}. Assume that $A\cap B\neq \emptyset$,  $A \subset \mathbb{R}^n$ is  compact set, $0\leq 2(\epsilon_k+\delta_k)\leq \bar{\epsilon}<1$ and $\bar{\epsilon}\neq 0$.  Then,  $\{x^k\}_{k \in \mathbb{N}^*}$ converges to a point  $x^*\in {\Fix}{\DR}_{A,B}$. As a consequence, $\{y_A^k\}_{k\in\N}$ converges to  $P_A(x^*)\in A\cap B$.
\end{theorem}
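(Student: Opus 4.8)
The plan is to establish that $\{x^k\}$ is quasi-Fej\'er monotone with respect to the (nonempty, by Proposition~\ref{pr:cfp}) set ${\Fix}{\DR}_{A,B}$, extract asymptotic regularity, identify every cluster point as a fixed point, and finally promote subsequential convergence to full convergence. The driving estimate is Proposition~\ref{pr:frfpfr}: fixing $\bar x\in{\Fix}{\DR}_{A,B}$ and applying it with $x=x^k$, $x^+=x^{k+1}$, relative points $y_A^{k-1},y_B^{k-1}$ and forcing parameters $\epsilon_k,\delta_k$ (exactly the data used in \eqref{eq:cpAB}) yields
\[
\|x^{k+1}-\bar x\|^2\le\|x^k-\bar x\|^2-\|x^{k+1}-x^k\|^2+2(\epsilon_k+\delta_k)\|y_A^{k-1}-y_B^{k-1}\|^2 .
\]
The decisive observation is that the update \eqref{eq2:xk+1)} gives $y_B^{k-1}-y_A^{k-1}=x^k-x^{k-1}$, so the error term equals the lagged displacement $\|x^k-x^{k-1}\|^2$ (for $k=1$ it is the fixed constant $\|y_A^0-y_B^0\|^2$). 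Writing $a_k:=\|x^k-\bar x\|^2$ and $d_k:=\|x^{k+1}-x^k\|^2$, with $d_0:=\|y_A^0-y_B^0\|^2$, and using $2(\epsilon_k+\delta_k)\le\bar\epsilon$, this becomes the lag-one recursion $a_{k+1}\le a_k-d_k+\bar\epsilon\,d_{k-1}$.

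Next I would absorb the lag with a Lyapunov function. Setting $\phi_k:=a_k+\bar\epsilon\,d_{k-1}$, a one-line manipulation of the recursion gives $\phi_{k+1}\le\phi_k-(1-\bar\epsilon)\,d_k$ for all $k\ge 1$. Since $\bar\epsilon<1$ — this is exactly where the hypothesis $\bar\epsilon<1$ enters — the sequence $\phi_k$ is nonincreasing and bounded below by $0$, hence convergent, and telescoping gives $(1-\bar\epsilon)\sum_k d_k\le\phi_1<\infty$. Therefore $\|x^{k+1}-x^k\|\to 0$, and because $d_{k-1}\to 0$ the quantity $a_k=\phi_k-\bar\epsilon\,d_{k-1}$ converges; thus $\lim_k\|x^k-\bar x\|$ exists for every $\bar x\in{\Fix}{\DR}_{A,B}$ and $\{x^k\}$ is bounded.

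It then remains to identify the cluster points and conclude. From $\|x^{k+1}-x^k\|\to0$ and \eqref{eq2:xk+1)} I obtain $\|y_B^k-y_A^k\|\to0$, while Proposition~\ref{prop:dist} (applied to each inexact projection in \eqref{eq:cpAB}, using $\|y_A^{k-1}-y_B^{k-1}\|=\|x^k-x^{k-1}\|\to0$ and the boundedness $\epsilon_k,\delta_k\le\bar\epsilon/2$) gives $\|y_A^k-{\PJ}_A(x^k)\|\to0$ and $\|y_B^k-{\PJ}_B(2y_A^k-x^k)\|\to0$. Let $x^{k_j}\to x^*$ be a convergent subsequence (available by boundedness). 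Continuity of the projections (Lemma~\ref{le:projeccion}(iii)) then forces $y_A^{k_j}\to{\PJ}_A(x^*)$ and $y_B^{k_j}\to{\PJ}_B(2{\PJ}_A(x^*)-x^*)$; passing to the limit in $y_B^k-y_A^k\to0$ gives ${\PJ}_B(2{\PJ}_A(x^*)-x^*)={\PJ}_A(x^*)$, i.e. $x^*\in{\Fix}{\DR}_{A,B}$. Since $\lim_k\|x^k-x^*\|$ exists and a subsequence tends to $0$, the whole sequence converges to $x^*$. Finally $x^k\to x^*$ together with continuity of ${\PJ}_A$ and $\|y_A^k-{\PJ}_A(x^k)\|\to0$ yields $y_A^k\to{\PJ}_A(x^*)$, and ${\PJ}_A(x^*)\in A\cap B$ by Proposition~\ref{pr:cfp}.

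The main obstacle is that the error term in Proposition~\ref{pr:frfpfr} is neither a priori vanishing nor obviously summable; the crux of the argument is to recognize it as the lagged displacement $\|x^k-x^{k-1}\|^2$ and to design the Lyapunov function $\phi_k=a_k+\bar\epsilon\,d_{k-1}$, whose monotonicity and summability hinge on $\bar\epsilon<1$. The compactness of $A$ plays only an auxiliary role: it is what allows the CondG subroutine (Algorithm~\ref{Alg:CondG}) to terminate through the LO oracle so that the inexact projections in \eqref{eq:cpAB} are well defined (cf. Remark~\ref{re:eipp}); the boundedness of $\{x^k\}$ itself comes from the Fej\'er property rather than from compactness.
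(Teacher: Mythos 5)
Your proof is correct, and its backbone coincides with the paper's: you start from the same application of Proposition~\ref{pr:frfpfr}, and your Lyapunov function $\phi_k=\|x^k-\bar x\|^2+\bar\epsilon\,d_{k-1}$ is exactly the paper's monotone sequence $\|x^{k}-\bar x\|^2+\bar\epsilon\,\|y_A^{k-1}-y_B^{k-1}\|^2$, because \eqref{eq2:xk+1)} gives $d_{k-1}=\|x^k-x^{k-1}\|^2=\|y_A^{k-1}-y_B^{k-1}\|^2$; the derivation of $\|y_A^k-y_B^k\|\to 0$, boundedness, and the final Fej\'er-type promotion of subsequential to full convergence are likewise the same. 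Where you genuinely diverge is in identifying cluster points of $\{x^k\}$ as fixed points. The paper uses the compactness of $A$ to extract a cluster point $\bar y$ of the shadow sequence $\{y_A^{k_j}\}$ and then passes to the limit in the variational inequalities of Definition~\ref{def:InexactProjC} to conclude $\bar y={\PJ}_A(x^*)$ and $\bar y={\PJ}_B(2\bar y-x^*)$. You instead invoke Proposition~\ref{prop:dist} (which the paper only uses at the very end, for the shadow sequence) together with the continuity of exact projections, Lemma~\ref{le:projeccion}(iii), to show that $y_A^k$ and $y_B^k$ asymptotically track ${\PJ}_A(x^k)$ and ${\PJ}_B(2y_A^k-x^k)$, after which the identification follows from continuity alone. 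Your route buys two things: it makes the compactness of $A$ mathematically superfluous for the convergence statement itself (it is needed only so that the CondG/LO-oracle subroutine can actually produce the inexact projections, a point you correctly flag), and it yields the slightly stronger conclusion $\sum_k\|x^{k+1}-x^k\|^2<\infty$ rather than mere vanishing of the steps. The paper's route, by contrast, works directly from the defining inequality of the feasible inexact projection and so does not need the quantitative tracking estimate at that stage, but it pays for this with the compactness hypothesis on $A$.
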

\begin{proof}
Let ${\bar x}\in  {\Fix}{\DR}_{A,B}$.  Applying     Proposition~\ref{pr:frfpfr}    with ${y_A}=y^{k-1}_A$, ${y_B}=y^{k-1}_B$, $\epsilon=\epsilon_k$, $x={x^k}$,  ${y_A^+}=y^{k-1}_A$, ${y_B^+}=y^{k-1}_B$, ${x^+}={x^{k+1}}$ and taking into account \eqref{eq2:xk+1)}  we obtain  that  
\begin{equation*} 
\|x^{k+1}-{\bar x}\|^2  \leq  \|x^{k}-{\bar x}\|^2  -\|y_A^k-y_B^k\|^2+2(\epsilon_k+\delta_k)\|y_A^{k-1}-y_B^{k-1}\|^2, \qquad \forall k \in \mathbb{N}.
\end{equation*}
Since $0\leq 2(\epsilon_k+\delta_k)\leq \bar{\epsilon}<1$ we have  $\|x^{k+1}-{\bar x}\|^2  \leq  \|x^{k}-{\bar x}\|^2  -\|y_A^k-y_B^k\|^2+\bar{\epsilon}\|y_A^{k-1}-y_B^{k-1}\|^2$, for all $k \in \mathbb{N}^*$, which is equivalent to 
\begin{equation} \label{eq:fejer}
\|x^{k+1}-{\bar x}\|^2 + \bar{\epsilon}\|y_A^k-y_B^k\|^2 \leq  \|x^{k}-{\bar x}\|^2 +\bar{\epsilon}\|y_A^{k-1}-y_B^{k-1}\|^2 - (1-\bar{\epsilon})\|y_A^k-y_B^k\|^2, \qquad \forall k \in \mathbb{N}^*.
\end{equation}
It follows from \eqref{eq:fejer}  that, for any ${\bar x}\in  {\Fix}{\DR}_{A,B}$,   the sequence $(\|x^{k+1}-{\bar x}\|^2 + \bar{\epsilon}\|y_A^k-y_B^k\|^2 )_{k \in \mathbb{N}^*}$ is monotonous, non-increasing  and  bounded from below by zero, which implies it must converge. Thus, taking into account that the  inequality in  \eqref{eq:fejer} also implies that 
\begin{equation*} 
(1-\bar{\epsilon})\|y_A^k-y_B^k\|^2    \leq  (\|x^{k}-{\bar x}\|^2 +\bar{\epsilon}\|y_A^{k-1}-y_B^{k-1}\|^2)-(\|x^{k+1}-{\bar x}\|^2 + \bar{\epsilon}\|y_A^k-y_B^k\|^2), \qquad \forall k \in \mathbb{N}^*, 
\end{equation*}
and   $0< \bar{\epsilon}<1$, we have   $\lim_{k\to + \infty}\|y_A^k-y_B^k\|=0$. Due to  $(\|x^{k+1}-{\bar x}\|^2 + \bar{\epsilon}\|y_A^k-y_B^k\|^2 )_{k \in \mathbb{N}^*}$  being  convergent, we also conclude that  $\{x^k\}_{k \in \mathbb{N}^*}$ is bounded.  Let $\{x^{k_j}\}_{j \in \mathbb{N}}$ be a converging subsequence of $\{x^k\}$, and ${x^*} = \lim_{j\to +\infty} x^{k_j}$.   We prove that ${x^*}\in {\Fix}{\DR}_{A,B}$. For that, we first note that from the fact that $A$ is a compact set, $\{y_A^k\}_{k \in \mathbb{N}}\subset A$ and  $\lim_{k\to + \infty}\|y_A^k-y_B^k\|=0$,  we conclude that both sequences $\{y_A^k\}_{k \in \mathbb{N}}$ and $\{y_B^k\}_{k \in \mathbb{N}}$ are also bounded. Let ${\bar y}$ be an cluster point of $\{y_A^{k_j}\}_{j \in \mathbb{N}}$ and $\{y_A^{k_\ell}\}_{l \in \mathbb{N}}$ be a subsequence such that  $\lim_{l\to +\infty} y_A^{k_\ell}= {\bar y}$.  Since   $\lim_{k\to + \infty}\|y_A^k-y_B^k\|=0$ we obtain that  $\lim_{l\to + \infty}\|y_B^{k_\ell}-{\bar y}\|\leq \lim_{l\to + \infty}( \|y_A^{k_\ell}-y_B^{k_\ell}\|+ \|y_A^{k_\ell}-{\bar y}\|)=0$. Thus,  $\lim_{l\to +\infty} y_B^{k_\ell}= {\bar y}$.  Hence,  both sequence $\{y_A^{k_\ell}\}_{l \in \mathbb{N}}$ and $\{y_B^{k_\ell}\}_{l \in \mathbb{N}}$ converge to a same point  ${\bar  y}\in A\cap B$. Furthermore,  \eqref{eq:cpAB} and Definition~\ref{def:InexactProjC}  imply  that 
\begin{align*}
\langle  x^{k_\ell}-y_A^{k_\ell}, ~z- y_A^{k_\ell}\rangle & \leq  \epsilon_{k_\ell}\|y_A^{k_\ell-1}-y_B^{k_\ell-1}\|^2, \qquad \forall z\in A,  \\
\langle  (2y^{k_\ell}_A-x^{k_\ell})-y_B^{k_\ell}, ~z- y_B^{k_\ell}\rangle & \leq  \delta_{k_\ell}\|y_A^{k_\ell-1}-y_B^{k_\ell-1}\|^2,  \qquad \forall z\in B.
\end{align*}
Taking the limit as $\ell$ goes to $+\infty$ in these inequalities, we conclude that $ \langle  {x^*}-{\bar y}, ~z- {\bar  y}\rangle\leq  0$,  for all  $z\in A$,  and  $\langle  (2{\bar  y}-{x^*})-{\bar  y}, ~z- {\bar  y}\rangle\leq  0$ for all $z\in B$, 
Hence,  ${\bar y}={\PJ}_A({x^*})$ and  ${\bar  y}={\PJ}_B(2{\bar  y}- {x^*})$ or equivalently,  ${\PJ}_A({x^*})= {\PJ}_B(2{\PJ}_A({x^*})- {x^*})$, which after some algebraic manipulations  yields
$$
{x^*}  =  \frac{1}{2}\left( 2 {\PJ}_B\left({\RF}_A({x^*})\right)-{\RF}_A({x^*}) +{x^*}\right).
$$
Therefore, using  \eqref{eq:DR-Operator} we have  ${x^*}  = {\DR}_{A,B}({x^*})$.  Moreover, $(\|x^{k+1}-{x^*}\|^2 + \bar{\epsilon}\|y_A^k-y_B^k\|^2 )_{k \in \mathbb{N}^*}$  is also  monotonous and  non-increasing. Considering that $\lim_{j\to +\infty} \|y_A^{k_j}-y_B^{k_j}\|=0$,  \eqref{eq2:xk+1)} this implies that $\lim_{j\to +\infty} x^{k_j+1}= {x^*}$, i.e., $\lim_{j\to +\infty} \|x^{k_j+1}-{x^*}\|=0$.  Thus, $\lim_{j\to +\infty}(\|x^{k_j+1}-{x^*}\|^2 + \bar{\epsilon}\|y_A^{k_j}-y_B^{k_j}\|^2 )=0$. Hence, the sequence    $(\|x^{k+1}-{x^*}\|^2 + \bar{\epsilon}\|y_A^k-y_B^k\|^2 )_{k \in \mathbb{N}^*}$ must converge to  zero. Therefore,  thanks to $\lim_{k\to + \infty}\|y_A^{k-1}-y_B^{k-1}\|=0$ we have
\begin{equation*}
\lim_{j\to +\infty} \|x^{k}-{x^*}\|^2 = \lim_{j\to +\infty}(\|x^{k}-{x^*}\|^2 + \bar{\epsilon}\|y_A^{k-1}-y_B^{k-1}\|^2 )=0, 
\end{equation*}
which implies  that  $\lim_{j\to +\infty} x^{k}={x^*}$,  and the proof of the first statement  follows. We proceed to prove  the second one.  Since  the sequence $\{x^k\}_{k\in\N}$ converges to $x^*\in Fix(T_{A,B})$, it follows  from \eqref{eq2:xk+1)} that $\lim_{k\to \infty}\|y_A^k-y_B^k\|=0$.  Furthermore,  by continuity of the projection,  the  sequence $\{P_A(x^k)\}_{k\in\N^*}$ is convergent to $P_A(x^*)$. Using Proposition~\ref{prop:dist} with $C=A$,  $u=x^k$, $v=y_B^{k-1}$, $\epsilon= \epsilon_k$,  $y_C=y_A^{k-1}$ and $y^+_C=y_A^{k}$, we have that $\|y_A^k-P_A(x^k)\|\leq \sqrt{2\epsilon_k}\|y_A^{k-1}-y_B^{k-1}\|$.  Hence, due to $\lim_{k\to \infty}\|y_A^k-y_B^k\|=0$, we obtain $\lim_{k\to\infty}\|y_A^k-P_A(x^k)\|=0$. Since $\{P_A(x^k)\}_{k\in\N}$ converges to $P_A(x^*)$, we have $\lim_{k\to\infty}y_A^k=P_A(x^*)$, and using Proposition~\ref{pr:cfp} the second  statement  follows, and the proof is completed. 
\end{proof}
We end this section with a special case of Algorithm~\ref{Alg:IDRA}, namely, when the {\it exact projection onto $B$  is easy to compute}.  In this case, we use Algorithm~\ref{Alg:CondG} only to compute inexact project onto the set $A$. We recall that the inexact projection also  accepts an exact one, and  then the  statement of the result is as follows.
\begin{corollary}\label{cr:scase}
Let $\{x^k\}_{k \in \mathbb{N}^*}$ be  a sequence generated by Algorithm~\ref{Alg:IDRA}. Assume that $A\cap B\neq \emptyset$,  $A \subset \mathbb{R}^n$ is a compact set, $0\leq 2\epsilon_k\leq \bar{\epsilon}<1$ and $\bar{\epsilon}\neq 0$.  Furthermore, assume that  $y^k_B= {\PJ}_B(2y^k_A-x^k)$, for all $k \in \mathbb{N}^*$. Then,  $\{x^k\}_{k \in \mathbb{N}^*}$ converges to a point  $x^*\in {\Fix}{\DR}_{A,B}$. As a consequence,   $\{y_A^k\}_{k\in\N}$ converges to  $P_A(x^*)\in A\cap B$.
\end{corollary}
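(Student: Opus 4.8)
The plan is to recognize this corollary as the instance of Theorem~\ref{th:conver} in which the second forcing parameter is taken to be zero, $\delta_k = 0$ for every $k$, so that no fresh convergence argument is needed. The only thing that genuinely requires checking is that the extra hypothesis $y^k_B = {\PJ}_B(2y^k_A - x^k)$ is precisely the statement that $y^k_B$ is a feasible inexact projection onto $B$ with forcing parameter $0$.

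First I would invoke Remark~\ref{re:eipp}, which records that for forcing parameter equal to zero the inexact projection coincides with the exact one, namely ${\PJ}_B(u) = {\PJ}^{0}_B(y, v, u)$ for all admissible $y \in B$, $v \in \mathbb{R}^n$ and $u \in \mathbb{R}^n$. Applying this with $u = 2y^k_A - x^k$, $y = y^{k-1}_B$ and $v = y^{k-1}_A$ turns the hypothesis into $y^k_B \in {\PJ}^{\delta_k}_B(y^{k-1}_B, y^{k-1}_A, 2y^k_A - x^k)$ with $\delta_k = 0$. Hence the sequence $\{x^k\}$ at hand is exactly a sequence produced by Algorithm~\ref{Alg:IDRA} under the choice $\delta_k = 0$, so that the second relation in \eqref{eq:cpAB} holds with zero forcing parameter, while the first relation is governed by the $\epsilon_k$ of the corollary.

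With $\delta_k = 0$ the condition $0 \leq 2(\epsilon_k + \delta_k) \leq \bar{\epsilon} < 1$ required by Theorem~\ref{th:conver} reduces precisely to the present hypothesis $0 \leq 2\epsilon_k \leq \bar{\epsilon} < 1$ with $\bar{\epsilon} \neq 0$. Since we moreover assume $A \cap B \neq \varnothing$ and $A$ compact, every hypothesis of Theorem~\ref{th:conver} is in force, and I would simply apply that theorem to conclude that $\{x^k\}$ converges to some $x^* \in {\Fix}{\DR}_{A,B}$ and that $\{y^k_A\}$ converges to ${\PJ}_A(x^*) \in A \cap B$.

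I do not expect a genuine obstacle here: the whole argument is a one-line specialization. The only care needed is to match the exact-projection hypothesis to the $\delta_k = 0$ case of the inexact projection operator through Remark~\ref{re:eipp}, so that the Fej\'er-type estimate \eqref{eq:fejer} underlying Theorem~\ref{th:conver} can be reused verbatim rather than rederived.
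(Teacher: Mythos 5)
Your proposal is correct and follows essentially the same route as the paper's own proof: both use Remark~\ref{re:eipp} to identify the exact projection onto $B$ as a feasible inexact projection with forcing parameter zero, and then apply Theorem~\ref{th:conver} with $\delta_k=0$ for all $k$. The only cosmetic difference is that you use the equality ${\PJ}_B(u)={\PJ}^{0}_B(y,v,u)$ at $\delta=0$, while the paper notes the inclusion ${\PJ}_B(2y^k_A-x^k)\in{\PJ}^{\delta}_B(y^{k-1}_B,y^{k-1}_A,2y^k_A-x^k)$ for all $\delta\geq 0$ before setting $\delta_k=0$; these are interchangeable.
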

\begin{proof}
It follows from  Remark~\ref{re:eipp} that ${\PJ}_B(2y^k_A-x^k)\in {\PJ}^{\delta}_B\big(y^{k-1}_B,y^{k-1}_A, 2y^k_A-x^k\big)$, for all $\delta \geq 0$.
Thus, the  proof is an immediate consequence of   Theorem~\ref{th:conver} by taking $\delta_k=0$, for all $k \in \mathbb{N}^*$.
\end{proof}
\section{Numerical experiments}  \label{Sec:NumExp}
We applied the approximate Douglas Rachford algorithm to find the interesection of two ellipses, and of an ellipse and a hyperplane. In each case, we consider the cases when the interior of the intersection is empty or not. In the latter case, the iterates $x_k$ are predicted by Theorem \ref{th:conver} to converge to a point in the intersection, whereas in the former case, the approximate projections $y_A^k$ are predicted to converge to a point in the intersection. The algorithms were implemented in Julia and are available at \url{https://github.com/ugonj/approximateDR}.

We perform four sets of experiments. In each experiment, we apply the approximate Douglas-Rachford algorithm to find the intersection of an ellipse with either an ellipse or a half-space, in both cases testing the scenarios when the interior of the intersection is either empty or nonempty.

Define the matrices $R$ and $D$ as follows 
 $$
  R(\theta) := \begin{pmatrix} \cos(\theta) & \sin(\theta) \\ -\sin(\theta) & \cos(\theta)\end{pmatrix}, \qquad \quad D(\alpha,\beta) := \begin{pmatrix}\alpha & 0 \\ 0 & \beta\end{pmatrix}.
  $$

The first ellipse is given by:
\[
    E_1 := \{x\in \R^2: \langle (x-z_1), M_1(x-z_1)\rangle \leq 1\}
\]
where $z_1 = (0,0)$ and
 \[
 M_1 := R\Big(\frac{\pi}{3}\Big)^T D(2,0.2) R\Big(\frac{\pi}{3}\Big), \hfill M_2 := \lambda R\Big(\frac{\pi}{4}\Big)^T D(2,0.2) R\Big(\frac{\pi}{4}\Big), 
 \]
 
 We then find the intersection between the ellipse $E_1$ and the following sets:
 \begin{align*}
     E_2 &:= \{x\in \R^2: \langle (x-z_2), M_2(x-z_2)\rangle \leq 1\}, \\
     E_3 &:= \{x\in \R^2: \langle (x-z_3), M_3(x-z_3)\rangle \leq 1\}, \\
     H_1 &:= \{x\in \R^2: \langle (1,0)^T,x\rangle \geq 1.3\}, \\
     H_2  &:= \{x\in \R^2: \langle (1,0)^T,x\rangle \geq \max_{x\in E_1}(x_1)\}
 \end{align*}

We set $z_2 = (2.3,1.5)$ and
\[
M_2 := \lambda R\Big(\frac{\pi}{4}\Big)^T D(2,0.2) R\Big(\frac{\pi}{4}\Big),
\]
and $z_1\approx (1.788,-0.664)$ and  $M_3 := \lambda M_2$, with $\lambda\approx 0.145$, so that the intersection of the two sets is a single point ($x\approx (1.393,1.310)$).

All sets intersect the ellipse $E_1$, and $\mathrm{int} (E_1\cap E_2) \neq \emptyset$, $\mathrm{int} (E_1\cap E_3) = \emptyset$, $\mathrm{int} (E_1\cap H_1) \neq \emptyset$, and $\mathrm{int} (E_1\cap H_2) = \emptyset$.

In all cases, we set the initial point to $x_0= (-1,1.5)$. We stop the algorithm when $\|y_A^k-y_B^k\|^2 < 10^{-6}$ (note that $x^k\in {\Fix}{\DR}_{A,B}$ iff  $\|y_A^k-y_B^k\|^2 =0$). In Tables~\ref{table:iterations} and ~\ref{table:computation}, we summarise the result of these experiments. We report the number of iterations taken by the algorithm in Table~\ref{table:iterations}, namely the number of iterations taken by the sequence $\{x^k\}$ to converge. When the intersection is nonempty, we also report in brackets the number of iterations until the shadow sequence $\{y_A^k\}$ or $\{y_B^k\}$ reaches the intersection (so we have an alternative stopping criterion reached whenever we find a point in the intersection). The shadow sequences do not reach the intersection when the interior is empty (although they converge to it). We present the relative CPU time taken by the Approximate Douglas Rachford method in Table~\ref{table:computation}. This number is the ratio of the CPU time taken by the appDR method over the CPU time taken by the exact DR, both averaged over 10000 runs, as measured using BenchmarkTools.jl~\cite{benchmarktools} (numbers less than 1 mean that the approximate DR method was faster than the exact DR method). To simulate the case when a closed form for the projection isn't known, we use the Conditional Gradient algorithm to find the exact projections over the ellipses, and stop when $-s_\ell^*\leq 10^{-6}$.
In all experiments we project onto half-spaces using a closed form formula for an exact projection. By Corollary~\ref{cr:scase}, the results from this paper apply.

The results presented in these tables indicate that the approximate Douglas Rachford method is competitive with the exact Douglas Rachford method. In all cases the approximate version of the method is faster than the exact version, even when it requires more iterations. In all cases both algorithms require few iterations to reach a point in the intersection, even when it takes more iterations for the sequences to converge to a fix point of the operator. 
  \begin{table}[ht]
  \centering
  \begin{tabular}{cccccc}\toprule
  $\epsilon$ & $E_2$ & $E_3$ & $H_1$ & $H_2$ \\\midrule
  0.245  & 52 (2) & 14 & 10 (4) & 5\\
  0.120  & 51 (2) & 13 & 12 (3) & 5\\
  Exact  & 24 (6) & 11 & 15 (4) & 6\\\bottomrule
  \end{tabular}
  \caption{Number of iterations taken by the approximate Douglas Rachford algorithm before convergence, for various values of $\epsilon$.}\label{table:iterations}
  \end{table}
  
    \begin{table}[ht]
  \centering
  \begin{tabular}{cccccc}\toprule
  $\epsilon$ & $E_2$ & $E_3$ & $H_1$ & $H_2$ \\\midrule
  0.245  & 0.34 & 0.48 & 0.17 & 0.15\\
  0.120  & 0.36 & 0.48 & 0.23 & 0.18\\\bottomrule
  \end{tabular}
  \caption{Relative computational time taken by the approximate Douglas Rachford algorithm compared to the exact Douglas Rachford algorithm, for various values of $\epsilon$.}\label{table:computation}
  \end{table}
  
  We illustrate the iterations of the algorithms on our experiments in  Figures~\ref{figni} to~\ref{fig:ellipse_hyperplane_nointerior}. It can be seen that the behaviour of the algorithms are comparable for all values of $\epsilon$.
  
  \begin{figure}[ht]
    \begin{tikzpicture}[scale=0.82]
      \draw[rotate=-45,fill=blue,fill opacity=0.2] (0,0) ellipse (2 and 0.2);
      \draw[rotate around={60:(2.3,0.5)},fill=green,fill opacity=0.2] (2.3,0.5) ellipse (2 and 0.4);
      \draw[every node/.style={circle,draw=blue!50,fill=blue!20,inner sep=0.7}]
(-1.0,1.5) node {} -- (1.2999999999999998,2.0) node {} -- (2.571917656836094,0.7943428328503053) node {} -- (2.9060752191369823,0.4915884333641444) node {} -- (3.053518594044455,0.2398474393217438) node {} -- (3.118774791718569,0.07629942281524071) node {} -- (3.113444995191702,-0.0008496721484581027) node {} -- (3.0630165806437053,-0.013506528475393242) node {} -- (3.012588166095709,-0.026163384802328604) node {} -- (2.9621597515477123,-0.038820241129263966) node {} -- (2.9226302073318142,-0.0672811033693912) node {} -- (2.8831006631159157,-0.09574196560951842) node {} -- (2.8435711189000177,-0.12420282784964565) node {} -- (2.8040415746841196,-0.15266369008977287) node {} -- (2.7645120304682216,-0.1811245523299001) node {} -- (2.7249824862523235,-0.20958541457002733) node {} -- (2.6854529420364255,-0.23804627681015456) node {} -- (2.6459233978205274,-0.2665071390502818) node {} -- (2.6063938536046294,-0.294968001290409) node {} -- (2.5668643093887313,-0.32342886353053624) node {} -- (2.5273347651728333,-0.35188972577066346) node {} -- (2.4878052209569352,-0.3803505880107907) node {} -- (2.448275676741037,-0.4088114502509179) node {} -- (2.408746132525139,-0.43727231249104515) node {} -- (2.369216588309241,-0.4657331747311724) node {} -- (2.329687044093343,-0.4941940369712996) node {} -- (2.290157499877445,-0.5226548992114268) node {} -- (2.250627955661547,-0.551115761451554) node {} -- (2.211098411445649,-0.5795766236916813) node {} -- (2.171568867229751,-0.6080374859318085) node {} -- (2.132039323013853,-0.6364983481719357) node {} -- (2.0925097787979547,-0.664959210412063) node {} -- (2.0529802345820567,-0.6934200726521902) node {} -- (2.0059928570191254,-0.7108175314931706) node {} -- (1.959005479456194,-0.7282149903341509) node {} -- (1.9190430256847728,-0.7557160067052413) node {} -- (1.8790805719133516,-0.7832170230763316) node {} -- (1.8391181181419305,-0.8107180394474218) node {} -- (1.7991556643705096,-0.8382190558185121) node {} -- (1.7591932105990884,-0.8657200721896023) node {} -- (1.7192307568276672,-0.8932210885606926) node {} -- (1.6792683030562459,-0.9207221049317829) node {} -- (1.6393058492848245,-0.9482231213028731) node {} -- (1.599343395513403,-0.9757241376739634) node {} -- (1.5593809417419817,-1.0032251540450536) node {} -- (1.5194184879705603,-1.030726170416144) node {} -- (1.479456034199139,-1.0582271867872342) node {} -- (1.4394935804277176,-1.0857282031583244) node {} -- (1.3995311266562964,-1.1132292195294147) node {} -- (1.3595686728848753,-1.140730235900505) node {} -- (1.319606219113454,-1.1682312522715952) node {} -- (1.3102419246106183,-1.176429562074131) node {};
    \end{tikzpicture}
    \hfill
    \begin{tikzpicture}[scale=0.82]
     \draw[rotate=-45,fill=blue,fill opacity=0.2] (0,0) ellipse (2 and 0.2);
      \draw[rotate around={60:(2.3,0.5)},fill=green,fill opacity=0.2] (2.3,0.5) ellipse (2 and 0.4);
      \draw[every node/.style={circle,draw=blue!50,fill=blue!20,inner sep=0.7}]
(-1.0,1.5) node {} -- (1.2999999999999998,2.0) node {} -- (2.963169894987529,0.3779866768130681) node {} -- (2.9190527789057064,0.475606803060068) node {} -- (3.0420545305689775,0.23114639959945016) node {} -- (3.046897046300576,0.14022941363200037) node {} -- (3.039108983992874,0.06691329772194932) node {} -- (3.012625198171726,0.019617646954303902) node {} -- (2.9782714509785277,-0.01634503000530918) node {} -- (2.9439177037853295,-0.05230770696492204) node {} -- (2.9010757017157767,-0.0759816359886274) node {} -- (2.858233699646224,-0.09965556501233275) node {} -- (2.815391697576671,-0.1233294940360381) node {} -- (2.78020694361485,-0.15807428345291763) node {} -- (2.745022189653029,-0.19281907286979738) node {} -- (2.7015228720104005,-0.21550600893191296) node {} -- (2.658023554367772,-0.23819294499402854) node {} -- (2.6217515212902485,-0.2713706064320205) node {} -- (2.5854794882127248,-0.30454826787001243) node {} -- (2.5418172645148056,-0.32699196494542404) node {} -- (2.4981550408168864,-0.34943566202083565) node {} -- (2.4610965688114392,-0.3814580987680922) node {} -- (2.424038096805992,-0.4134805355153488) node {} -- (2.386979624800545,-0.4455029722626054) node {} -- (2.34270468200004,-0.46700210860909674) node {} -- (2.298429739199535,-0.4885012449555881) node {} -- (2.260644987746046,-0.5194110691500953) node {} -- (2.222860236292557,-0.5503208933446024) node {} -- (2.185075484839068,-0.5812307175391096) node {} -- (2.140977075490799,-0.602970121922823) node {} -- (2.09687866614253,-0.6247095263065363) node {} -- (2.058759054345253,-0.6550613878667673) node {} -- (2.020639442547976,-0.6854132494269981) node {} -- (1.9825198307506993,-0.7157651109872292) node {} -- (1.9444002189534224,-0.7461169725474602) node {} -- (1.9008843790208132,-0.768624542362814) node {} -- (1.857368539088204,-0.7911321121781679) node {} -- (1.8187004518330383,-0.8206026129214767) node {} -- (1.7800323645778724,-0.8500731136647857) node {} -- (1.7413642773227065,-0.8795436144080946) node {} -- (1.7026961900675404,-0.9090141151514035) node {} -- (1.6640281028123742,-0.9384846158947124) node {} -- (1.625360015557208,-0.9679551166380214) node {} -- (1.5826488604123463,-0.9914399769041613) node {} -- (1.539937705267485,-1.0149248371703012) node {} -- (1.4972265501226236,-1.0384096974364412) node {} -- (1.4577886002688705,-1.0664071947478975) node {} -- (1.4183506504151175,-1.0944046920593538) node {} -- (1.3789127005613646,-1.12240218937081) node {} -- (1.3394747507076117,-1.1503996866822663) node {} -- (1.3086040191168975,-1.1736789845792477) node {};
    \end{tikzpicture}
    \hfill
    \begin{tikzpicture}[scale=0.82]
      \draw[rotate=-45,fill=blue,fill opacity=0.2] (0,0) ellipse (2 and 0.2);
      \draw[rotate around={60:(2.3,0.5)},fill=green,fill opacity=0.2] (2.3,0.5) ellipse (2 and 0.4);
      \draw[every node/.style={circle,draw=blue!50,fill=blue!20,inner sep=0.7}]
(-1.0,1.5) node {} -- (1.6642919109341463,0.05325872279447874) node {} -- (2.032996183271009,-0.44406258824826483) node {} -- (2.074196613943129,-0.5824739540863599) node {} -- (2.0567396398979265,-0.6425376482340701) node {} -- (2.0237910994104022,-0.6806963940912838) node {} -- (1.9859668407755773,-0.7118297790013286) node {} -- (1.9465312735824996,-0.7406263295966204) node {} -- (1.9065611265030369,-0.7686458518630948) node {} -- (1.866415610183123,-0.7964102731761675) node {} -- (1.8262134720606285,-0.8240923080786322) node {} -- (1.7859933549462594,-0.8517481806961695) node {} -- (1.745767618586219,-0.8793958761806278) node {} -- (1.705540150259962,-0.9070410512834426) node {} -- (1.6653121540171032,-0.934685458152309) node {} -- (1.6250839980285108,-0.9623296325560127) node {} -- (1.584855793799114,-0.9899737367699168) node {} -- (1.5446275749363287,-1.0176178197067063) node {} -- (1.5043993515859129,-1.045261896084877) node {} -- (1.4641711268226756,-1.0729059704063202) node {} -- (1.4239429015981622,-1.100550044056583) node {} -- (1.3837146762064947,-1.1281941174824694) node {} -- (1.3434864507595965,-1.1558381908090352) node {} -- (1.311846900103361,-1.177580289711558) node {} -- (1.311846900103361,-1.177580289711558) node {};
    \end{tikzpicture}
      \caption{Iterates of the approximate Douglas Rachford algorithm to find the intersection with nonempty interior of two ellipses for $\epsilon=0.245$, $\epsilon=0.120$ and exact projections.} \label{figni}
   \end{figure}

  \begin{figure}[ht]
    \begin{tikzpicture}[scale=1.1]
      \draw[rotate=-45,fill=blue,fill opacity=0.2] (0,0) ellipse (2 and 0.2);
      \draw[rotate around={60:(1.7882409,-0.66441136)},fill=green,fill opacity=0.2] (1.7882409,-0.66441136) ellipse (0.762734 and 0.152547);
      \draw[every node/.style={circle,draw=blue!50,fill=blue!20,inner sep=0.7}]
(-1.0,1.5) node {} -- (0.7882409346931343,0.8355886399695496) node {} -- (2.1732568289547194,-0.4398976995074002) node {} -- (2.2372997882250685,-0.41182019608840004) node {} -- (2.351701734237025,-0.5913114360687454) node {} -- (2.401745726166118,-0.669597081029015) node {} -- (2.4253326885721513,-0.7112757109014962) node {} -- (2.430023670117614,-0.720292225836492) node {} -- (2.438718442852827,-0.736774260235086) node {} -- (2.44741321558804,-0.75325629463368) node {} -- (2.44824627058572,-0.7548223367569835) node {} -- (2.4507993880042847,-0.7597700875168598) node {} -- (2.4517006141245905,-0.7615280952636029) node {} -- (2.4529153961255297,-0.7638957892198579) node {};
      \draw[every node/.style={circle,draw=red!50,fill=red!20,inner sep=0.7}]
        (-1.0,1.5) -- (0.0,0.0) node {}
        (0.7882409346931343,0.8355886399695496) -- (0.0,0.0) node {}
        (2.1732568289547194,-0.4398976995074002) -- (1.3209729349912358,-1.30356384289595) node {}
        (2.2372997882250685,-0.41182019608840004) -- (1.2791902728162292,-1.1312037677083835) node {}
        (2.351701734237025,-0.5913114360687454) -- (1.3435482268990935,-1.2324093627284591) node {}
        (2.401745726166118,-0.669597081029015) -- (1.370005256422153,-1.2690163778162478) node {}
        (2.4253326885721513,-0.7112757109014962) -- (1.3889012372827236,-1.301678492753733) node {}
        (2.430023670117614,-0.720292225836492) -- (1.3848974460929733,-1.2942129732901348) node {}
        (2.438718442852827,-0.736774260235086) -- (1.3848974460929733,-1.2942129732901348) node {}
        (2.44741321558804,-0.75325629463368) -- (1.3927591638305061,-1.3091289655654255) node {}
        (2.44824627058572,-0.7548223367569835) -- (1.3905434426206538,-1.304805959559421) node {}
        (2.4507993880042847,-0.7597700875168598) -- (1.3921953339189121,-1.3079957025725544) node {}
        (2.4517006141245905,-0.7615280952636029) -- (1.3918817780382788,-1.3073860163630424) node {}
        (2.4529153961255297,-0.7638957892198579) -- (1.3926497788587973,-1.308880159879567) node {};
    \end{tikzpicture}
    \hfill
    \begin{tikzpicture}[scale=1.1]
     \draw[rotate=-45,fill=blue,fill opacity=0.2] (0,0) ellipse (2 and 0.2);
      \draw[rotate around={60:(1.7882409,-0.66441136)},fill=green,fill opacity=0.2] (1.7882409,-0.66441136) ellipse (0.762734 and 0.152547);
      \draw[every node/.style={circle,draw=blue!50,fill=blue!20,inner sep=0.7}]
      (-1.0,1.5) node {} -- (0.7882409346931343,0.8355886399695496) node {} -- (2.1732568289547194,-0.4398976995074002) node {} -- (2.2372997882250685,-0.41182019608840004) node {} -- (2.358082781450654,-0.5997522290806182) node {} -- (2.403518905584,-0.6689146235604573) node {} -- (2.4262694933707945,-0.7094298131320186) node {} -- (2.439320099641045,-0.7336224310495805) node {} -- (2.446872504568434,-0.7479446644832588) node {} -- (2.451231243501812,-0.7563251442317466) node {} -- (2.4537300435938496,-0.7611688979746194) node {} -- (2.4551504428917017,-0.76393508059268) node {} -- (2.4559505826632764,-0.7654971200554284) node {};
      \draw[every node/.style={circle,draw=red!50,fill=red!20,inner sep=0.7}]
        (-1.0,1.5) -- (0.0,0.0) node {}
        (0.7882409346931343,0.8355886399695496) -- (0.0,0.0) node {}
        (2.1732568289547194,-0.4398976995074002) -- (1.3209729349912358,-1.30356384289595) node {}
        (2.2372997882250685,-0.41182019608840004) -- (1.2723786900111516,-1.1219481969480503) node {}
        (2.358082781450654,-0.5997522290806182) -- (1.347725559103391,-1.2407178354604291) node {}
        (2.403518905584,-0.6689146235604573) -- (1.3704110954499422,-1.2693650403687071) node {}
        (2.4262694933707945,-0.7094298131320186) -- (1.3801110769664862,-1.2856876120227065) node {}
        (2.439320099641045,-0.7336224310495805) -- (1.385609278309348,-1.29555799650659) node {}
        (2.446872504568434,-0.7479446644832588) -- (1.3888029443033592,-1.3014997501917807) node {}
        (2.451231243501812,-0.7563251442317466) -- (1.3906628831446994,-1.3050364761973954) node {}
        (2.4537300435938496,-0.7611688979746194) -- (1.391741283938885,-1.3071140473222078) node {}
        (2.4551504428917017,-0.76393508059268) -- (1.3923615434651626,-1.30831819047752) node {}
        (2.4559505826632764,-0.7654971200554284) -- (1.3927148811277472,-1.3090071858570043) node {};
    \end{tikzpicture}
    \hfill
    \begin{tikzpicture}[scale=1.1]
      \draw[rotate=-45,fill=blue,fill opacity=0.2] (0,0) ellipse (2 and 0.2);
      \draw[rotate around={60:(1.7882409,-0.66441136)},fill=green,fill opacity=0.2] (1.7882409,-0.66441136) ellipse (0.762734 and 0.152547);
      \draw[every node/.style={circle,draw=blue!50,fill=blue!20,inner sep=0.7}]
        (-1.0,1.5) node {} -- (1.796407341300168,-0.3928064083779881) node {} -- (2.0562594404161945,-0.7506648026887706) node {} -- (2.1100986840504277,-0.8416857397464588) node {} -- (2.130963651718051,-0.8796862556839584) node {} -- (2.1409939185290296,-0.8985846360141039) node {} -- (2.146270516572635,-0.9087006549364485) node {} -- (2.1491706608241725,-0.9143132108210776) node {} -- (2.1508017761356104,-0.9174864654568593) node {} -- (2.151730812606964,-0.919299246128854) node {} -- (2.1522637286462025,-0.9203408692771082) node {} -- (2.152570656068578,-0.9209413688463102) node {};
      \draw[every node/.style={circle,draw=red!50,fill=red!20,inner sep=0.7}]
        (-1.0,1.5) -- (-1.1480148168230162,1.291200937154192) node {}
        (1.796407341300168,-0.3928064083779881) -- (1.1271527216378239,-0.9333968013887954) node {}
        (2.0562594404161945,-0.7506648026887706) -- (1.3387946067315095,-1.2178076221779803) node {}
        (2.1100986840504277,-0.8416857397464588) -- (1.3723048214231244,-1.2718954001367297) node {}
        (2.130963651718051,-0.8796862556839584) -- (1.3831552192640835,-1.2910278732991143) node {}
        (2.1409939185290296,-0.8985846360141039) -- (1.3878682496402852,-1.2997315879826448) node {}
        (2.146270516572635,-0.9087006549364485) -- (1.3902097392230195,-1.3041671108446713) node {}
        (2.1491706608241725,-0.9143132108210776) -- (1.391455426462563,-1.3065609138292777) node {}
        (2.1508017761356104,-0.9174864654568593) -- (1.3921430708418332,-1.307893194545107) node {}
        (2.151730812606964,-0.919299246128854) -- (1.3925305458235786,-1.3086474448665046) node {}
        (2.1522637286462025,-0.9203408692771082) -- (1.3927514356939765,-1.309078589857765) node {}
        (2.152570656068578,-0.9209413688463102) -- (1.3928781996775093,-1.3093264018970017) node {};
    \end{tikzpicture}
       \caption{Iterates of the approximate Douglas Rachford algorithm to find the intersection with empty interior of two ellipses for $\epsilon=0.245$, $\epsilon=0.120$ and exact projections.} \label{figei}
   \end{figure}

   \begin{figure}
    \begin{tikzpicture}[every mark/.append style={scale=0.6}]
     \begin{axis}[xlabel={\small Iteration (\(k\))},ylabel={\small $\|y_A^k-y_B^k\|$},width=0.3\textwidth]
       \addplot coordinates {(1,2.3537204591879637) (2,1.7525363706556414) (3,0.45091185707530235) (4,0.2917414555485072) (5,0.1760861296016655) (6,0.07733297863612668) (7,0.051992509132617076) (8,0.051992509132617076) (9,0.051992509132617076) (10,0.04870939894279292) (11,0.04870939894279292) (12,0.04870939894279292) (13,0.04870939894279292) (14,0.04870939894279292) (15,0.04870939894279292) (16,0.04870939894279292) (17,0.04870939894279292) (18,0.04870939894279292) (19,0.04870939894279292) (20,0.04870939894279292) (21,0.04870939894279292) (22,0.04870939894279292) (23,0.04870939894279292) (24,0.04870939894279292) (25,0.04870939894279292) (26,0.04870939894279292) (27,0.04870939894279292) (28,0.04870939894279292) (29,0.04870939894279292) (30,0.04870939894279292) (31,0.04870939894279292) (32,0.04870939894279292) (33,0.05010474253366705) (34,0.05010474253366705) (35,0.04851086077236671) (36,0.04851086077236671) (37,0.04851086077236671) (38,0.04851086077236671) (39,0.04851086077236671) (40,0.04851086077236671) (41,0.04851086077236671) (42,0.04851086077236671) (43,0.04851086077236671) (44,0.04851086077236671) (45,0.04851086077236671) (46,0.04851086077236671) (47,0.04851086077236671) (48,0.04851086077236671) (49,0.04851086077236671) (50,0.04851086077236671) (51,0.0124459750583952) (52,0.0)};
     \end{axis}
    \end{tikzpicture}
    \hfill
    \begin{tikzpicture}[every mark/.append style={scale=0.6}]
     \begin{axis}[xlabel={\small Iteration (\(k\))},ylabel={\small $\|y_A^k-y_B^k\|$},width=0.3\textwidth]
       \addplot coordinates {(1,2.3537204591879637) (2,2.3231576184556966) (3,0.10712613583928612) (4,0.27366095770562715) (5,0.09104585820352434) (6,0.07372860209338492) (7,0.054205806819523535) (8,0.04973423449009408) (9,0.04973423449009408) (10,0.04894785037922489) (11,0.04894785037922489) (12,0.04894785037922489) (13,0.04944863297379618) (14,0.04944863297379618) (15,0.049060041818783456) (16,0.049060041818783456) (17,0.04915707072305052) (18,0.04915707072305052) (19,0.04909286421324379) (20,0.04909286421324379) (21,0.04897720697028921) (22,0.04897720697028921) (23,0.04897720697028921) (24,0.04921873041468084) (25,0.04921873041468084) (26,0.04881705310787681) (27,0.04881705310787681) (28,0.04881705310787681) (29,0.04916575444357705) (30,0.04916575444357705) (31,0.04872720291322409) (32,0.04872720291322409) (33,0.04872720291322409) (34,0.04872720291322409) (35,0.048992030209346234) (36,0.048992030209346234) (37,0.048618220720574465) (38,0.048618220720574465) (39,0.048618220720574465) (40,0.048618220720574465) (41,0.048618220720574465) (42,0.048618220720574465) (43,0.04874198842403286) (44,0.04874198842403286) (45,0.04874198842403286) (46,0.048365398213724375) (47,0.048365398213724375) (48,0.048365398213724375) (49,0.048365398213724375) (50,0.03866429592689254) (51,0.0)};
     \end{axis}
    \end{tikzpicture}
    \hfill
    \begin{tikzpicture}[every mark/.append style={scale=0.6}]
     \begin{axis}[xlabel={\small Iteration (\(k\))},ylabel={\small $\|y_A^k-y_B^k\|$},width=0.3\textwidth]
       \addplot coordinates {(1,3.0317506015237115) (2,0.6190891105945319) (3,0.14441323236024872) (4,0.06254912707212242) (5,0.05041523783196307) (6,0.048989409031250465) (7,0.0488303725771877) (8,0.048812972513414536) (9,0.04881112139095212) (10,0.04881093080408765) (11,0.04881091180188092) (12,0.048810909961900996) (13,0.048810909788748066) (14,0.04881090977278441) (15,0.048810909771329125) (16,0.04881090977754143) (17,0.04881090978754908) (18,0.0488109097716184) (19,0.048810909771186954) (20,0.04881090977123197) (21,0.048810909781918425) (22,0.048810909771187516) (23,0.038389842802694626) (24,0.0)};
     \end{axis}
    \end{tikzpicture}
      \caption{Distance $\|y_A^k-y_B^k\|$, when $\mathrm{int}(E_1\cap E_2)\neq \emptyset$ for $\epsilon=0.245$, $\epsilon=0.120$ and exact projections}
   \end{figure}

   \begin{figure}
    \begin{tikzpicture}[every mark/.append style={scale=0.6}]
     \begin{axis}[xlabel={\small Iteration (\(k\))},ylabel={\small $\|y_A^k-y_B^k\|$},width=0.3\textwidth]
       \addplot coordinates {(1,1.907681340226844) (2,1.882852737085279) (3,0.0699274397525575) (4,0.2128495019515072) (5,0.09291417187407157) (6,0.04789000922521213) (7,0.010163800934339186) (8,0.018634820386401376) (9,0.018634820386401376) (10,0.0017738287857401972) (11,0.005567642780821396) (12,0.001975550494806389) (13,0.0026611407291407562) (14,0.00098117470102488)};
     \end{axis}
    \end{tikzpicture}
    \hfill
    \begin{tikzpicture}[every mark/.append style={scale=0.6}]
     \begin{axis}[xlabel={\small Iteration (\(k\))},ylabel={\small $\|y_A^k-y_B^k\|$},width=0.3\textwidth]
       \addplot coordinates {(1,1.907681340226844) (2,1.882852737085279) (3,0.0699274397525575) (4,0.22339870294413075) (5,0.08275190744898708) (6,0.04646579204817453) (7,0.02748819902660491) (8,0.01619151601043035) (9,0.009446218603449534) (10,0.0054503167083723505) (11,0.003109549877683062) (12,0.001755047274929662) (13,0.0007274810327096371)};
     \end{axis}
    \end{tikzpicture}
    \hfill
    \begin{tikzpicture}[every mark/.append style={scale=0.6}]
     \begin{axis}[xlabel={\small Iteration (\(k\))},ylabel={\small $\|y_A^k-y_B^k\|$},width=0.3\textwidth]
       \addplot coordinates {(1,3.376775106232906) (2,0.44225077025788967) (3,0.10575195098893429) (4,0.04335188677886042) (5,0.02139521047810123) (6,0.011409484026502213) (7,0.006317564422925208) (8,0.0035679240662769658) (9,0.002036978775015748) (10,0.001170033455870412) (11,0.0006743917075539298)};
     \end{axis}
    \end{tikzpicture}

      \caption{Distance $\|y_A^k-y_B^k\|$, when $\mathrm{int}(E_1\cap E_2) = \emptyset$ for $\epsilon=0.245$, $\epsilon=0.120$ and exact projections}
   \end{figure}

  \begin{figure}[ht]
    \begin{tikzpicture}[scale=1.1]
    \draw[rotate=-45,fill=blue,fill opacity=0.2] (0,0) ellipse (2 and 0.2);
    \fill[green,fill opacity=0.2] (1.3,1.6) rectangle (1.8,-1.6);
    \draw (1.3,1.6) -- (1.3,-1.6);
    \draw[every node/.style={circle,draw=blue!50,fill=blue!20,inner sep=0.6}]
      (-1.0,1.5) node {} -- (0.30000000000000004,0.0) node {} -- (1.6,0.0) node {} -- (2.0840015998400157,-0.7998400159984002) node {} -- (1.9627345594848262,-1.393123138565978) node {} -- (1.8414675191296366,-1.3931231385659775) node {} -- (1.720200478774447,-1.3931231385659777) node {} -- (1.5989334384192575,-1.393123138565978) node {} -- (1.4776663980640679,-1.3931231385659775) node {} -- (1.4212670403551897,-1.3931231385659777) node {};
  \end{tikzpicture}
  \hfill
    \begin{tikzpicture}[scale=1.1]
    \draw[rotate=-45,fill=blue,fill opacity=0.2] (0,0) ellipse (2 and 0.2);
    \fill[green,fill opacity=0.2] (1.3,1.6) rectangle (1.8,-1.6);
    \draw (1.3,1.6) -- (1.3,-1.6);
    \draw[every node/.style={circle,draw=blue!50,fill=blue!20,inner sep=0.6}]
      (-1.0,1.5) node {} -- (0.30000000000000004,0.0) node {} -- (1.6,0.0) node {} -- (2.1841021525042668,-0.6671968283947391) node {} -- (2.121754667496822,-1.3188203935401046) node {} -- (2.059407182489377,-1.3188203935401046) node {} -- (1.9381401421341875,-1.3931231385659775) node {} -- (1.8168731017789979,-1.3931231385659777) node {} -- (1.6956060614238084,-1.393123138565978) node {} -- (1.5743390210686188,-1.3931231385659775) node {} -- (1.4530719807134291,-1.3931231385659777) node {} -- (1.4212670403551897,-1.393123138565978) node {};
  \end{tikzpicture}
  \hfill 
    \begin{tikzpicture}[scale=1.1]
    \draw[rotate=-45,fill=blue,fill opacity=0.2] (0,0) ellipse (2 and 0.2);
    \fill[green,fill opacity=0.2] (1.3,1.6) rectangle (1.8,-1.6);
    \draw (1.3,1.6) -- (1.3,-1.6);
    \draw[every node/.style={circle,draw=blue!50,fill=blue!20,inner sep=0.6}]
      (-1.0,1.5) node {} -- (1.448014816823016,1.291200937154192) node {} -- (2.534644192872839,0.06910392569820178) node {} -- (2.6570362611439555,-0.9967120306975277) node {} -- (2.5406323019877544,-1.3650944442802646) node {} -- (2.41938182423479,-1.3917482010974944) node {} -- (2.2981148314238724,-1.3930502954533113) node {} -- (2.1768477912133575,-1.3931187850445261) node {} -- (2.0555807508587907,-1.3931228395123516) node {} -- (1.9343137105036063,-1.3931231144132972) node {} -- (1.813046670148417,-1.3931231361994252) node {} -- (1.6917796297931984,-1.393123138271285) node {} -- (1.5705125894380092,-1.3931231385156368) node {} -- (1.4492455490828198,-1.3931231385522904) node {} -- (1.4212670403552037,-1.3931231385568696) node {} -- (1.4212670403552037,-1.3931231385568696) node {};
  \end{tikzpicture}
  \caption{Iterates of the approximate Douglas Rachford algorithm to find the intersection with nonempty interior of  ellipse and a half-plane for $\epsilon=0.245$, $\epsilon=0.120$ and exact projections.}\label{fig:ellipsehyperplane nonempty}
\end{figure}

   \begin{figure}
    \begin{tikzpicture}[every mark/.append style={scale=0.6}]
     \begin{axis}[xlabel={\small Iteration (\(k\))},ylabel={\small $\|y_A^k-y_B^k\|$},width=0.3\textwidth]
       \addplot coordinates {(1,1.9849433241279208) (2,1.3) (3,0.9348805270407635) (4,0.605549798612833) (5,0.12126704035518965) (6,0.12126704035518965) (7,0.12126704035518965) (8,0.12126704035518965) (9,0.05639935770887816) (10,0.0)};
     \end{axis}
    \end{tikzpicture}
    \hfill
    \begin{tikzpicture}[every mark/.append style={scale=0.6}]
     \begin{axis}[xlabel={\small Iteration (\(k\))},ylabel={\small $\|y_A^k-y_B^k\|$},width=0.3\textwidth]
       \addplot coordinates {(1,1.9849433241279208) (2,1.3) (3,0.8867507724158556) (4,0.6545994802468986) (5,0.0623474850074448) (6,0.142220227094767) (7,0.12126704035518965) (8,0.12126704035518965) (9,0.12126704035518965) (10,0.12126704035518965) (11,0.03180494035823944) (12,0.0)};
     \end{axis}
    \end{tikzpicture}
    \hfill
    \begin{tikzpicture}[every mark/.append style={scale=0.6}]
     \begin{axis}[xlabel={\small Iteration (\(k\))},ylabel={\small $\|y_A^k-y_B^k\|$},width=0.3\textwidth]
       \addplot coordinates {(1,2.456903252476644) (2,1.6353239759460785) (3,1.072820335043769) (4,0.3863359734017545) (5,0.12414548363832685) (6,0.12127398317493569) (7,0.12126705955140005) (8,0.12126704042234578) (9,0.12126704035549613) (10,0.12126704035519138) (11,0.1212670403552183) (12,0.12126704035518965) (13,0.12126704035518965) (14,0.027978508727615203) (15,0.0)};
     \end{axis}
    \end{tikzpicture}
      \caption{Distance $\|y_A^k-y_B^k\|$, when $\mathrm{int}(E_1\cap H) \neq \emptyset$ for $\epsilon=0.245$, $\epsilon=0.120$ and exact projections.}
   \end{figure}

\begin{figure}
    \begin{tikzpicture}[scale=1]
    \draw[rotate=-45,fill=blue,fill opacity=0.2] (0,0) ellipse (2 and 0.2);
    \fill[green,fill opacity=0.2] (1.421267,1.6) rectangle (1.9,-1.6);
    \draw (1.421267,1.6) -- (1.421267,-1.6);
    \draw[every node/.style={circle,draw=blue!50,fill=blue!20,inner sep=0.7}]
    (-1.0,1.5) node {} -- (0.4212670403551897,0.0) node {} -- (1.8425340807103794,0.0) node {} -- (2.3241105822531214,-0.921082805370617) node {} -- (2.3241105822531214,-1.3931231385659777) node {};
    \draw[dashed,very thin,every node/.style={circle,draw=red!50,fill=red!20,inner sep=0.7}]
      (-1.0,1.5) -- (0.0,0.0) node {}
      (0.4212670403551897,0.0) -- (0.0,0.0) node {}
      (1.8425340807103794,0.0) -- (0.9396905388124479,-0.921082805370617) node {}
      (2.3241105822531214,-0.921082805370617) -- (1.4212670403551897,-1.3931231385659775) node {}
      (2.3241105822531214,-1.3931231385659777) -- (1.4212670403551897,-1.3931231385659775) node {};
  \end{tikzpicture} 
  \hfill  
    \begin{tikzpicture}[scale=1]
    \draw[rotate=-45,fill=blue,fill opacity=0.2] (0,0) ellipse (2 and 0.2);
    \fill[green,fill opacity=0.2] (1.421267,1.6) rectangle (1.9,-1.6);
    \draw (1.421267,1.6) -- (1.421267,-1.6);
    \draw[every node/.style={circle,draw=blue!50,fill=blue!20,inner sep=0.7}]
    (-1.0,1.5) node {} -- (0.4212670403551897,0.0) node {} -- (1.6276883113541505,-0.21059139768580848) node {} -- (2.1940199611629523,-0.8137977761804803) node {} -- (2.1940199611629523,-1.3931231385659775) node {};
    \draw[dashed,very thin,every node/.style={circle,draw=red!50,fill=red!20,inner sep=0.7}]
      (-1.0,1.5) -- (0.0,0.0) node {}
      (0.4212670403551897,0.0) -- (0.2148457693562289,-0.21059139768580848) node {}
      (1.6276883113541505,-0.21059139768580848) -- (0.8549353905463879,-0.8137977761804804) node {}
      (2.1940199611629523,-0.8137977761804803) -- (1.42126704035519,-1.3931231385659777) node {}
      (2.1940199611629523,-1.3931231385659775) -- (1.42126704035519,-1.3931231385659777) node {};
  \end{tikzpicture} 
  \hfill 
   \begin{tikzpicture}[scale=1]
    \draw[rotate=-45,fill=blue,fill opacity=0.2] (0,0) ellipse (2 and 0.2);
    \fill[green,fill opacity=0.2] (1.421267,1.6) rectangle (1.9,-1.6);
    \draw (1.421267,1.6) -- (1.421267,-1.6);
    \draw[every node/.style={circle,draw=blue!50,fill=blue!20,inner sep=0.7}]
(-1.0,1.5) node {} -- (1.569281857178206,1.291200937154192) node {} -- (2.72232299125678,0.013467153235845908) node {} -- (2.897121701603159,-1.0865632432073675) node {} -- (2.8987833405197327,-1.3778892696171487) node {} -- (2.8987861425969155,-1.3925615307061925) node {} -- (2.898786146313372,-1.3931027146894364) node {};
    \draw[dashed,very thin,every node/.style={circle,draw=red!50,fill=red!20,inner sep=0.7}]
(-1.0,1.5) -- (-1.1480148168230162,1.291200937154192) node {}
(1.569281857178206,1.291200937154192) -- (0.26822590627661547,0.013467153235845908) node {}
(2.72232299125678,0.013467153235845908) -- (1.2464683300088109,-1.0865632432073675) node {}
(2.897121701603159,-1.0865632432073675) -- (1.419605401438616,-1.3778892696171487) node {}
(2.8987833405197327,-1.3778892696171487) -- (1.421264238278007,-1.3925615307061925) node {}
(2.8987861425969155,-1.3925615307061925) -- (1.4212670366387332,-1.3931027146894364) node {}
(2.898786146313372,-1.3931027146894364) -- (1.4212670403745586,-1.393122396208641) node {};
  \end{tikzpicture}
\caption{Iterates of the approximate Douglas Rachford algorithm to find the intersection with empty interior of  ellipse and a half-plane for $\epsilon=0.245$, $\epsilon=0.120$ and exact projections.}\label{fig:ellipsehyperplane empty}
   \end{figure}

   \begin{figure}
    \begin{tikzpicture}[every mark/.append style={scale=0.6}]
     \begin{axis}[xlabel={\small Iteration (\(k\))},ylabel={\small $\|y_A^k-y_B^k\|$},width=0.3\textwidth]
       \addplot coordinates {(1,2.0663978319771825) (2,1.4212670403551897) (3,1.039379363460499) (4,0.47204033319536043) (5,2.220446049250313e-16)};
     \end{axis}
    \end{tikzpicture}
    \hfill
    \begin{tikzpicture}[every mark/.append style={scale=0.6}]
     \begin{axis}[xlabel={\small Iteration (\(k\))},ylabel={\small $\|y_A^k-y_B^k\|$},width=0.3\textwidth]
       \addplot coordinates {(1,2.0663978319771825) (2,1.2246636354109688) (3,0.8273992220396492) (4,0.5793253623854973) (5,3.1401849173675503e-16)};
     \end{axis}
    \end{tikzpicture}
    \hfill
    \begin{tikzpicture}[every mark/.append style={scale=0.6}]
     \begin{axis}[xlabel={\small Iteration (\(k\))},ylabel={\small $\|y_A^k-y_B^k\|$},width=0.3\textwidth]
       \addplot coordinates {(1,2.5777521817021856) (2,1.721077418201604) (3,1.1138318823941835) (4,0.29133076512377065) (5,0.014672261356610879) (6,0.0005411839832570809)};
     \end{axis}
    \end{tikzpicture}
      \caption{Distance $\|y_A^k-y_B^k\|$, when $\mathrm{int}(E_1\cap H) = \emptyset$ for $\epsilon=0.245$, $\epsilon=0.120$ and exact projections.} \label{fig:ellipse_hyperplane_nointerior}
   \end{figure}

\section{Conclusions} \label{Sec:Conclusions}
In this paper, we proposed the approximate Douglas Rachford algorithm for finding a point in the intersection of a convex, compact set with a convex set. This method is based on the Douglas Rachford algorithm, where exact projections are replaced with inexact projections, which can be found using the Conditional Gradient algorithm. The algorithm produces a sequence of iterates that converges towards a fix point of the exact Douglas Rachford operator, and the sequence of approximate projections converges towards a point in the intersection between the two sets. Numerical results demonstrate that the proposed algorithm works well in practice, and competitively with the exact Douglas Rachford algorithm. When the interior of the intersection is non-empty both algorithms converged after finitely many iterations. In future work, we plan to extend our results to the case when neither set is bounded, and to the infinite dimensional case. We will also investigate the complexity of the proposed algorithm, and verify the finite convergence of the algorithm when the interior is nonempty.

\end{document}